\DeclareMathOperator{\trace}{trace}
\DeclareMathOperator{\Diag}{Diag}
\DeclareMathOperator{\diag}{diag}
\newtheorem{thm}{Theorem}
\newtheorem{lem}[thm]{Lemma}
\newtheorem{cor}[thm]{Corollary}
\newtheorem{bem}[thm]{Remark}
\newcommand{\epr}{\hfill $\Box$}
\newcommand{\N}{\mathbb{N}}
\newcommand{\Proof}{{\em Proof}.~}
\begin{document}

\title{Stable-Set and Coloring bounds based on 0-1 quadratic
  optimization 
\footnote{This project is supported by the Austrian Science Fund (FWF): DOC 78.}
}
\author{
  {Dunja Pucher} \thanks{Department of Mathematics,
  University of Klagenfurt, Austria, {\tt
    dunja.pucher@aau.at}}
\and  {Franz Rendl}\thanks{Department of Mathematics,
  University of Klagenfurt, Austria,
  {\tt franz.rendl@aau.at} }}

\date{\today}
%\date{}

\maketitle
\begin{abstract}
We consider semidefinite relaxations of Stable-Set and Coloring, which
are based on quadratic 0-1 optimization.
Information about the stability number and the chromatic number
is hidden in the objective function. This leads to simplified
relaxations which depend mostly on the number of vertices of the
graph. We also propose tightenings of the relaxations which
are based on the maximal cliques of the underlying graph.
Computational results on graphs from the literature show
the strong potential of this new approach. 
\end{abstract}

\noindent Keywords:  Stability number,
Chromatic number, semidefinite programming

\section{Introduction}

Let $G$ be a simple graph with vertex set $V(G)= \{1, \ldots, n \}$
and edge set $E(G)$. We also write $V$ and $E$ for short. 
We denote by $\overline{G}$ the complement graph of $G$. 
A subset $S \subseteq V$ is called stable if no edge joins vertices
in $S$. The stability number $\alpha(G)$ denotes the cardinality of a
maximum stable set in $G$.
The Stable-Set problem asks to determine $\alpha(G)$.  

A partition of $V(G)$ into $k$ stable sets $S_{1}, \ldots, S_{k}$
provides a $k$-coloring of $G$ by assigning the 'color' $i$ to all
vertices in $S_{i}$ for $i=1, \ldots, k$. This is indeed a coloring,
because adjacent vertices receive distinct colors, as they belong to
distinct stable sets. 

The smallest number $k$ such that $G$ has a $k$-partition into stable
sets is denoted by $\chi(G)$, the chromatic number of $G$. The
Coloring problem consists in determining $\chi(G)$. 
Stable-Set and Coloring are both contained in Karp's original list
of NP-complete problems from 1972, see \cite{Karp72}. 
H{\aa}stad~\cite{Hastad} shows that even getting a rough estimate on $\alpha(G)$
is NP-hard. Khanna, Linial and Safra~\cite{Khanna} show that
knowing $\chi(G)$ is of little use in actually finding a
coloring with $\chi(G)$ colors. They show that coloring
a $3$-colorable graph in polynomial time with at most $4$ colors is
not possible unless $P=NP$.

Lov$\acute{{\rm a}}$sz introduced the
following graph parameter $\vartheta(G)$. It can be
defined as the optimal value of the following 
semidefinite program
$$
\vartheta(G) = \max \sum_{i,j}X_{i,j} \mbox{ such that }\trace(X)=1, ~
X \succeq 0, ~X_{i,j}= 0 ~\forall [i,j] \in E(G).
$$
This parameter can be computed to fixed precision in polynomial time
and separates $\alpha(G)$ and $\chi(\overline{G})$
$$
\alpha(G) \leq \vartheta(G) \leq \chi(\overline{G}),
$$
as shown in \cite{Lov:79}. Since the graph parameters are integers
we have the slight strengthening
$$
\alpha(G) \leq \lfloor \vartheta(G) \rfloor \leq
\vartheta(G) \leq \lceil  \vartheta(G) \rceil 
\leq \chi(\overline{G}).
$$

\bigskip
We now consider quadratic optimization problems in 0-1
variables that serve the same purpose of getting bounds
on $\alpha(G)$ and $\chi(\overline{G})$. Consider
\begin{align}
s(k) := \min \frac{1}{2} x^{T}Ax \mbox{ such that }
x \in \{0,1 \}^{n}, ~ \sum_{i} x_{i}=k. \label{s(k)}
\end{align}
Clearly, problem (\ref{s(k)}) is infeasible if $k$ is not integer. If $k \in
\N$ and $s(k)=0$ then $G$ contains a stable set of size
$k$.
The most interesting case occurs for $k \in \N$ with $s(k)>0$ because
it shows that $G$ has no stable set of size $k$, hence we have the
upper bound
$$
\alpha(G) < k
$$
for the stability number $\alpha(G)$. 
Next, we consider 
\begin{align}
	c(k) := \min \frac{1}{2}\langle X, AX \rangle \mbox{ such that }
	X \in \{0,1\}^{n \times k}, ~ \sum_{j}X_{i,j}=1 ~\forall i. \label{c(k)}
\end{align}

Feasible matrices $X$ for this problem have exactly one nonzero entry
in each row, hence they represent partitions of $V(G)$ into $k$
partition blocks. 
Suppose $X$ is feasible for problem (\ref{c(k)}). 
The entry $X_{i,r}=1$ tells us that
vertex $i$ belongs to partition block $r$.
Therefore $X_{i,r}X_{j,r}=1$ exactly if $i$ and $j$ are in the same
block $r$. This is the basis for the following simple observation.

\begin{lem}
  The number 
  of edges joining vertices in the same partition blocks defined by $X$
  is given by  $\frac{1}{2}\langle X, AX \rangle$. 
\end{lem}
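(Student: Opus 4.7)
The plan is a direct expansion of the quadratic form. I would write
\[
\tfrac{1}{2}\langle X, AX\rangle \;=\; \tfrac{1}{2}\operatorname{trace}(X^{T}AX) \;=\; \tfrac{1}{2}\sum_{r=1}^{k}\sum_{i,j=1}^{n} X_{i,r}\,A_{i,j}\,X_{j,r},
\]
and then interpret each factor combinatorially.

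Next I would use the partition interpretation already set up in the paragraph preceding the lemma: because each row of $X$ contains exactly one $1$, the product $X_{i,r}X_{j,r}$ equals $1$ precisely when vertices $i$ and $j$ both lie in block $r$, and equals $0$ otherwise. Combined with the fact that $A_{i,j}=1$ exactly when $[i,j]\in E(G)$ (and $A$ has zero diagonal since $G$ is simple), the term $X_{i,r}A_{i,j}X_{j,r}$ is the indicator of the event ``$i\neq j$, $[i,j]\in E(G)$, and $i,j$ are both in block $r$.''

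Summing over $r$, each edge with both endpoints in a common block is thus counted; and since each such edge $\{i,j\}$ contributes once as the ordered pair $(i,j)$ and once as $(j,i)$ (blocks are disjoint, so there is a unique $r$ containing both endpoints), the double sum counts each monochromatic edge exactly twice. The factor $\tfrac{1}{2}$ in front then yields the stated count.

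There is no real obstacle here; the only thing to be careful about is the symmetry argument used to justify the factor $\tfrac{1}{2}$ and the fact that the sum over $r$ of $X_{i,r}X_{j,r}$ is at most $1$ because the partition blocks are disjoint (a direct consequence of $\sum_{j}X_{i,j}=1$ together with $X\in\{0,1\}^{n\times k}$). Once these are spelled out, the identity is immediate.
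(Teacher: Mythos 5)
Your proposal is correct and follows essentially the same route as the paper: the paper's one-line proof expands $\tfrac{1}{2}\langle X, AX\rangle$ into $\sum_{[i,j]\in E(G)}\sum_{r=1}^{k}X_{i,r}X_{j,r}$, which is exactly your double sum after absorbing the factor $\tfrac{1}{2}$ by passing from ordered pairs to unordered edges. You merely spell out the bookkeeping (zero diagonal, disjointness of blocks, double counting) that the paper leaves implicit.
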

\Proof
We have
$\frac{1}{2} \langle X, AX \rangle =
\sum_{[i,j] \in E(G)} \sum_{r=1}^{k}X_{i,r}X_{j,r}$ and the second term
counts the number of edges within the same partition blocks. 
\epr

\medskip

The number $c(k)$ can be used to get lower bounds on the chromatic
number. 
If we find $k \in \N$ such that 
$c(k)>0$ then $G$ has no partition into $k$
stable sets,    
providing the lower bound
$$ k < \chi(G) $$
for the chromatic number of $G$. 
These bounds are not directly useful because both $c(k)$ and $s(k)$
are NP-hard graph parameters.

\bigskip

It is the main purpose of the present paper to introduce 
semidefinite relaxations for
$c(k)$ and $s(k)$, and investigate when their optimal value is
positive.
We 
denote these relaxations by $P(t)$ and $Q(t)$

\begin{align}
	P(t) ~~~ \min \frac{1}{2} \langle A, Y \rangle \colon
	\begin{pmatrix} Y & e \\ e^T & t \end{pmatrix} \succeq 0,
	\hspace{0.5em} \diag(Y) = e, \hspace{0.5em} Y \geq 0 \label{CC(k)},
\end{align}

\begin{align}
	Q(t) ~~ \min \frac{1}{2} \langle A, X \rangle \colon X\succeq 0,
	\hspace{0.5em} X \geq 0, \hspace{0.5em} \trace(X) = t,
	\hspace{0.5em} X e = t \diag(X) \label{SS(k)}.
\end{align}
Their optimal values are $C(t)$ and $S(t)$, respectively.

It will turn out that these relaxations are closely related to
Schrijver's refinement of $\vartheta(G)$ in case of Stable-Set and to the
Szegedy-bound in case of Coloring. We also explore
refinements for these bounds which are based
on a list of all maximal cliques of the graph in question.
Our preliminary computations indicate the potential of this new
approach.

\medskip
We close this section with some words on notation used throughout.
The vector of all-ones is denoted by $e$ and we write $J=ee^{T}$ for
the matrix of all-ones.
Let $0_n$ be the zero matrix and $I_n$ the identity matrix of order
$n$.
We denote by $e_i$ the column $i$ of the matrix $I_n$. Furthermore, we set
\begin{align*}
	E_i &\coloneqq e_i e_i^T \\
	E_{i,j} &\coloneqq (e_i + e_j)(e_i + e_j)^T.
\end{align*}

\section{Semidefinite relaxations for $c(k)$}

Computing the parameter $c(k)$ leads to an NP-hard problem, so we
are interested in tractable relaxations.
As the objective function in $c(k)$ is quadratic, it seems plausible
to explore semidefinite relaxations.  
First, we note that $\langle X, AX \rangle = \langle A, XX^T \rangle$,
and the main diagonal of the matrix $XX^T$ clearly
equals the all-ones vector $e$ for all $X$ feasible for the problem
defining $c(k)$. 
Let us extend $X$
with an additional row of all-ones to get 
$ \hat{X} := \begin{pmatrix} X \\ e^T \end{pmatrix} $. 
Then
\begin{align*}
	\hat{X}\hat{X}^T := \begin{pmatrix} XX^T & e \\ e^t & k \end{pmatrix}.
\end{align*}
Finally, we allow arbitrary symmetric matrices $Y$ instead of $XX^{T}$
and for $t \geq 1$ obtain the semidefinite relaxation $P(t)$ 
given in (\ref{CC(k)}).

Clearly, for all integers $k \in \N$ we have
$$c(k) \geq C(k) \geq 0.$$ 
We are now investigating the function $C(t)$ in more detail. 
\begin{lem}
  The function $C(t)$ is monotonically decreasing for $t \geq 1$.
\end{lem}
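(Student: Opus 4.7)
The plan is to show that the feasible region of $P(t)$ enlarges as $t$ grows, while the objective depends only on $Y$. Take $1 \le t_1 \le t_2$ and let $Y$ be any feasible point for $P(t_1)$; I intend to argue that the same $Y$ is feasible for $P(t_2)$, which immediately gives $C(t_2) \le C(t_1)$.

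The only constraint that involves $t$ is the semidefiniteness of
\begin{align*}
M(t) := \begin{pmatrix} Y & e \\ e^T & t \end{pmatrix}.
\end{align*}
The key observation is that
\begin{align*}
M(t_2) = M(t_1) + (t_2 - t_1)\, e_{n+1} e_{n+1}^T,
\end{align*}
so $M(t_2)$ is a sum of two positive semidefinite matrices (using $t_2 - t_1 \ge 0$) and is therefore itself positive semidefinite. Equivalently, applying a Schur complement with respect to the $(n+1,n+1)$ block, $M(t) \succeq 0$ is equivalent to $Y \succeq \tfrac{1}{t} e e^T$, and the right-hand side shrinks in the Löwner order as $t$ increases, so the constraint weakens. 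The remaining requirements $\diag(Y) = e$ and $Y \ge 0$ do not mention $t$ at all, and the objective $\tfrac{1}{2}\langle A, Y \rangle$ is unchanged.

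Hence every feasible $Y$ for $P(t_1)$ is feasible for $P(t_2)$ with identical objective value, which yields $C(t_2) \le C(t_1)$ for all $1 \le t_1 \le t_2$.

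There is no real obstacle here; the only point to get right is the direction of monotonicity of the PSD constraint in $t$, and I would present it via the two-PSD-summands argument since it requires no case distinction on whether $t$ is strictly positive, unlike the Schur complement formulation.
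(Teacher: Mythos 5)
Your proof is correct and follows the same route as the paper: both arguments show that any feasible $Y$ for $P(t_1)$ remains feasible for $P(t_2)$ when $t_2 \ge t_1$, so the feasible region only grows and the optimal value cannot increase. The paper simply asserts this feasibility without justification, whereas you supply the (correct) verification via the decomposition $M(t_2) = M(t_1) + (t_2 - t_1)\, e_{n+1}e_{n+1}^T$.
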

\begin{proof}
  Consider $t' > t \geq 1$ and suppose that $Y$ is optimal for problem
  $P(t)$. Then $Y$ is also feasible for problem $P(t')$ and therefore
  $C(t') \leq C(t)$. 
\end{proof}
This monotonicity property is the basis for the following lower bound
on $\chi(G)$.
\begin{thm}
  Let $t$ be given such that $C(t)>0$. Then $\chi(G)\geq
  \lfloor t\rfloor +1$.
\end{thm}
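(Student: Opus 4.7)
The plan is to chain together the three facts already established in the paper: monotonicity of $C$, the relaxation inequality $c(k) \geq C(k)$, and the interpretation of $c(k) > 0$ as nonexistence of a $k$-coloring. The whole argument should then reduce to picking the right integer $k$ to apply these facts at.

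First I would set $k := \lfloor t \rfloor$. Since we are in the regime $t \geq 1$ in which $P(t)$ is defined and monotonicity applies, $k$ is a positive integer and $k \leq t$. By the previous lemma, $C$ is monotonically decreasing on $[1, \infty)$, so $C(k) \geq C(t) > 0$.

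Next I would invoke the integer relaxation bound $c(k) \geq C(k)$, which holds for all $k \in \N$ by construction of the semidefinite program $P(t)$ as a relaxation of the 0-1 quadratic problem defining $c(k)$. Combining with the previous step gives $c(k) \geq C(k) > 0$. By the discussion following the definition of $c(k)$ in equation (\ref{c(k)}), $c(k) > 0$ means that every partition of $V(G)$ into $k$ blocks has at least one edge inside a block, i.e., no such partition is into stable sets. Hence $G$ admits no $k$-coloring, so $\chi(G) > k$, and since $\chi(G)$ is an integer we conclude $\chi(G) \geq k + 1 = \lfloor t \rfloor + 1$.

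There is essentially no obstacle in this argument, as it is a bookkeeping exercise combining already-stated ingredients. The one point worth double-checking is that the monotonicity lemma applies, which requires $t \geq 1$ so that $\lfloor t \rfloor \geq 1$ and $P(\lfloor t \rfloor)$ is a meaningful relaxation of $c(\lfloor t \rfloor)$; the strict hypothesis $C(t) > 0$ together with $C(t) \leq C(1)$ (by monotonicity from above, noting $P(1)$ forces $Y = J$ and hence $C(1)$ equals the edge count, positive whenever the argument is nontrivial) makes this range the natural one.
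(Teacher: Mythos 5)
Your proof is correct and takes essentially the same route as the paper's: both hinge on monotonicity of $C$ applied at $k=\lfloor t\rfloor$ together with the fact that a $k$-coloring would force $C(k)=0$ (equivalently, your chain $c(k)\geq C(k)\geq C(t)>0$ rules one out). The paper merely phrases the argument as a contradiction, while you argue the contrapositive directly.
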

\begin{proof}
  Suppose $G$ has a coloring with $k= \lfloor t \rfloor$ colors given
  by the $n \times k$ partition matrix $X$. Then $XX^{T}$ is feasible
  for the problem $P(k)$ with $C(k)=0$. But then monotonicity of $C$
  shows that $C(t)=0$ since $t \geq k$, a contradiction.
\end{proof}

\begin{lem} 
  $C(1) = |E(G)|$.
\end{lem}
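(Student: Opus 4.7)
The plan is to show that the feasible region of $P(1)$ consists of a single matrix, namely $Y=J$, and then evaluate the objective at that point.

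First I would apply the Schur complement to the PSD constraint. Since the $(2,2)$ block of $\begin{pmatrix} Y & e \\ e^T & 1 \end{pmatrix}$ equals the positive scalar $1$, positive semidefiniteness of the full block matrix is equivalent to $Y - e \cdot 1^{-1} \cdot e^T \succeq 0$, i.e., $Y \succeq J$. Setting $Z \coloneqq Y - J$, the diagonal constraint $\diag(Y)=e$ gives $\diag(Z)=0$. A PSD matrix with zero diagonal must be the zero matrix (its diagonal entries dominate its off-diagonal entries in the $2\times 2$ principal minors), so $Z=0$ and hence $Y=J$.

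Next I would check feasibility of $Y=J$ in $P(1)$ to confirm it is indeed attained: $J \geq 0$, $\diag(J)=e$, and $\begin{pmatrix} J & e \\ e^T & 1 \end{pmatrix} = \begin{pmatrix} e \\ 1 \end{pmatrix}\begin{pmatrix} e^T & 1 \end{pmatrix} \succeq 0$. So $J$ is the unique feasible point. Finally, evaluating the objective gives
\[
C(1) = \tfrac{1}{2}\langle A, J\rangle = \tfrac{1}{2}\sum_{i,j} A_{ij} = |E(G)|,
\]
where the last equality uses that $A$ is the $0/1$ adjacency matrix of $G$ with zero diagonal, so every edge contributes $A_{ij}+A_{ji}=2$ to the sum.

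There is no real obstacle here; the only slightly non-mechanical step is the observation that a PSD matrix with zero diagonal vanishes, which is standard and follows from the fact that $0\preceq \begin{pmatrix} 0 & Z_{ij} \\ Z_{ij} & 0\end{pmatrix}$ forces $Z_{ij}=0$. Once that is in place, the rest is direct computation.
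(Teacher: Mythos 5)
Your proof is correct and follows the same route as the paper, which simply states that verifying $Y=J$ is the unique feasible point of $P(1)$ is "a simple exercise" and then evaluates $\tfrac{1}{2}\langle A,J\rangle$. You have carried out that exercise explicitly (Schur complement, then a PSD matrix with zero diagonal is zero), and all steps are sound.
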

\begin{proof}
  It is a simple exercise to verify that the only feasible solution
  for $P(1)$ is $Y=J$. Therefore  
  $\frac{1}{2}\langle A, J \rangle = |E(G)|$.  
\end{proof}  
\begin{lem}
  $C(n)=0$.
\end{lem}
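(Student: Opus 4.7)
The plan is simply to exhibit a feasible solution of $P(n)$ achieving objective value $0$, and then observe that the objective is bounded below by $0$.

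The natural candidate is $Y = I_n$. First I would check the three easy constraints: the diagonal is $e$ by construction, $I_n$ is entrywise nonnegative, and the objective $\frac{1}{2}\langle A, I_n\rangle$ vanishes because $A$ (the adjacency matrix) has zero diagonal.

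The only constraint requiring a brief argument is the positive semidefiniteness of
\begin{equation*}
M := \begin{pmatrix} I_n & e \\ e^T & n \end{pmatrix}.
\end{equation*}
For this I would apply a Schur complement: since $I_n \succ 0$, $M \succeq 0$ is equivalent to $n - e^T I_n^{-1} e \geq 0$, and the left side equals $n - n = 0$. Alternatively, one can observe directly that $M$ factors as $\binom{I_n}{e^T}\bigl(I_n\;\; e\bigr)$, which is manifestly PSD. This gives $C(n) \leq 0$.

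For the matching lower bound, note that $A \geq 0$ entrywise and every feasible $Y$ satisfies $Y \geq 0$, so $\frac{1}{2}\langle A, Y\rangle \geq 0$ on the entire feasible set. Combining both bounds yields $C(n) = 0$. There is no real obstacle here; the only mild subtlety is verifying the PSD block constraint, which is handled cleanly by the Schur complement.
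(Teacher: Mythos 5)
Your proof is correct and takes essentially the same approach as the paper: both exhibit the identity matrix $I_n$ (bordered by $e$ and $n$) as a feasible solution of $P(n)$ with objective value $0$. The only difference is in verifying that $\begin{pmatrix} I_n & e \\ e^T & n \end{pmatrix} \succeq 0$ --- you use the Schur complement (or the Gram factorization $\begin{pmatrix} I_n \\ e^T \end{pmatrix}\begin{pmatrix} I_n & e \end{pmatrix}$), whereas the paper argues via rank equality and eigenvalue interlacing; your verification is if anything cleaner, and you also make the lower bound $C(n) \geq 0$ explicit rather than citing it.
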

\begin{proof}
  Let
  $Y=\begin{pmatrix} I & e \\ e^T & n \end{pmatrix}$. Note that $Y$
  is singular because the sum of the first $n$ rows is equal to the
  last row. Therefore $I$ and $Y$ have the same rank (equal to $n$).
  It follows by the eigenvalue interlacing theorem
  that the $n$ nonzero eigenvalues of $Y$ are at least
  as large as the $n$ eigenvalues of $I$. Thus $Y$ is feasible for
  $P(n)$ showing that $0 \leq C(n) \leq \frac{1}{2} \langle A, I
  \rangle=0$. 
\end{proof}  

The relaxation $P(t)$ in (\ref{CC(k)}) is strictly feasible for any
$n > t > 1$
by considering for instance
$$
Y = \frac{t-1}{t}I + \frac{1}{t}J.
$$
Note in particular that $Y - \frac{1}{t}J =\frac{t-1}{t}I \succ 0$
implies that $\begin{pmatrix} Y & e \\ e^T & t \end{pmatrix} \succ 0$. 

We summarize the properties of $C(t)$ for nonempty graphs as
follows.
The function $C(t)$ is monotonically decreasing in the interval
$\left[ 1,n \right]$ with $C(1) = |E(G)|>0$ and $C(n)=0$. Therefore there
exists some value $t^{*}(G)>1$ such that $C(t)>0$ for $1 \leq t <
t^{*}(G)$ and $C(t)=0$ for $t \geq t^{*}(G)$.

We show next that $t^{*}(G)$ is actually equal to the  
lower bound for the chromatic number of $G$ given by
Szegedy~\cite{Szegedy}:
\begin{align}
	\vartheta^-(G) &= \min t \colon \begin{pmatrix} Y & e \\ e^T &
          t
        \end{pmatrix} \succeq 0, \hspace{0.5em} \textrm{diag}(Y) = e,
                                                                       \hspace{0.5em} Y_{i,j} = 0 \hspace{0.5em}
        \forall [i,j] \in E(G), \hspace{0.5em} Y \geq 0 \label{Szegedy}.
\end{align}
\begin{thm}\label{C(K)_Szegedy}
  We have $C(t)>0$ if and only if
  $t<  \vartheta^{-}(G)$.
\end{thm}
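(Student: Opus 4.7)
The plan is to observe that, because both the adjacency matrix $A$ and every feasible $Y$ for $P(t)$ are nonnegative, the objective $\frac{1}{2}\langle A,Y\rangle$ is a sum of nonnegative terms $Y_{ij}$ over the edges. Hence $C(t)=0$ holds if and only if $P(t)$ admits a feasible $Y$ with $Y_{ij}=0$ for every $[i,j]\in E(G)$. But adding this last restriction to the constraints of $P(t)$ gives exactly the feasibility set of the program \eqref{Szegedy} defining $\vartheta^-(G)$.

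So I would reformulate the claim as: $C(t)=0$ iff $t$ is feasible for the Szegedy program. The forward direction then says that any optimal $Y$ achieving $C(t)=0$ is a witness of feasibility at level $t$, which (by definition of $\vartheta^-(G)$ as the minimum feasible value) forces $t\ge\vartheta^-(G)$. The reverse direction requires a monotonicity observation: if some $Y$ is feasible for \eqref{Szegedy} at level $t_0$, then the same $Y$ remains feasible at every $t\ge t_0$. This follows from the Schur complement characterization
\begin{equation*}
\begin{pmatrix} Y & e \\ e^T & t \end{pmatrix} \succeq 0 \iff Y - \tfrac{1}{t}ee^T \succeq 0,
\end{equation*}
since the right-hand side becomes only ``more PSD'' as $t$ grows.

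Combining these two steps, the set of $t$ with $C(t)=0$ equals the set of $t$ that are feasible for \eqref{Szegedy}, and by the monotonicity observation the latter is exactly $[\vartheta^-(G),\infty)$ (using that the minimum in \eqref{Szegedy} is attained, which is routine since $\diag(Y)=e$ together with $Y\ge 0$ keeps the feasible slice compact). Consequently $C(t)>0$ iff $t<\vartheta^-(G)$.

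I do not expect a substantive obstacle; the content of the theorem is that the only way to make the objective vanish, given the nonnegativity constraints, is to be zero on edges, which instantly identifies the two feasibility sets. The one place where a small amount of care is needed is checking that the infimum in the definition of $\vartheta^-(G)$ is actually attained so that ``$C(t)=0$'' translates cleanly into ``$t\ge\vartheta^-(G)$'' rather than ``$t>\vartheta^-(G)$''; this compactness argument is the only non-cosmetic verification.
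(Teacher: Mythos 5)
Your proof is correct and follows essentially the same route as the paper: both identify the set $\{t \colon C(t)=0\}$ with the feasibility region of the Szegedy program (using that nonnegativity of $Y$ forces any zero-objective solution to vanish on the edges) and both rely on monotonicity of feasibility in $t$ (the paper via its earlier monotonicity lemma for $C(t)$, you via the Schur complement $Y - \tfrac{1}{t}ee^T \succeq 0$). Your explicit attention to attainment of the minima is a welcome extra precision that the paper's argument leaves implicit.
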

\begin{proof} We first consider problem $P(t)$ for $t = \vartheta^-(G)$.
  Let $Y$ be the optimal solution of $\vartheta^-(G)$ as stated in
  (\ref{Szegedy}). Then $Y$ is also feasible
  for problem $P(\vartheta^{-}(G))$ with
  value $0$. Therefore, $C( \vartheta^-(G)) = 0$. Now let $t' > t =
  \vartheta^-(G)$. Then according to the monotonicity of $C(t)$
  we have that $C(t') = 0$.
  In case when $t < \vartheta^-(G)$, we note that the problem~(\ref{Szegedy}) is not feasible. Hence, any $Y$ satisfying 
	\begin{align*}
		\begin{pmatrix} Y & e \\ e^T & t \end{pmatrix} \succeq 0, \hspace{0.5em} \diag(Y) = e, \hspace{0.5em} Y \geq 0
	\end{align*} 
	will have an entry $Y_{i,j} > 0$ for some $[i,j] \in E(G)$,
        and hence $C(t) > 0$.
\end{proof}
Here is an immediate consequence of this theorem. 
\begin{cor}
  The smallest $k \in \N$ with $C(k)=0$ is given by
  $k=\lceil \vartheta^{-}(G)\rceil$.
\end{cor}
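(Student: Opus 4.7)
The plan is to derive this purely from Theorem~\ref{C(K)_Szegedy}, which already gives the sharp threshold: $C(t)>0 \iff t<\vartheta^{-}(G)$. Equivalently, by taking contrapositives, $C(t)=0 \iff t\geq \vartheta^{-}(G)$. So my task reduces to identifying the smallest natural number $k$ satisfying $k\geq \vartheta^{-}(G)$, which is by definition $\lceil \vartheta^{-}(G)\rceil$.

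Concretely, I would proceed in two short steps. First, set $k^{*}:=\lceil \vartheta^{-}(G)\rceil$. Since $k^{*}\geq \vartheta^{-}(G)$, the theorem immediately yields $C(k^{*})=0$, so the set of natural numbers on which $C$ vanishes is nonempty (alternatively, one may invoke the already established $C(n)=0$ together with $\vartheta^{-}(G)\leq n$ to ensure $k^{*}$ is a well-defined integer in $\{1,\dots,n\}$). Second, to show minimality, take any $k\in\N$ with $k<k^{*}=\lceil \vartheta^{-}(G)\rceil$; then $k<\vartheta^{-}(G)$, so by Theorem~\ref{C(K)_Szegedy} we have $C(k)>0$. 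Hence no natural number strictly smaller than $k^{*}$ belongs to the zero set of $C$, and the claim follows.

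There is essentially no obstacle here; the work has all been done in proving Theorem~\ref{C(K)_Szegedy} and establishing monotonicity of $C$. The only mild subtlety worth flagging in the write-up is that the ``smallest $k\in\N$'' is guaranteed to exist, which is ensured either by $C(n)=0$ (Lemma above) or directly by the feasibility argument already used in the theorem.
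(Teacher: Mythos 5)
Your proof is correct and matches the paper's intent exactly: the paper gives no separate argument for this corollary, presenting it as an immediate consequence of Theorem~\ref{C(K)_Szegedy}, which is precisely the two-step deduction you spell out. Your added remark about existence of the minimum (via $C(n)=0$, hence $\vartheta^{-}(G)\leq n$) is a harmless and reasonable bit of extra care.
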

   
\begin{bem}\label{relax_P(t)}
  A weaker relaxation for $c(k)$ is obtained by
  requiring in problem $P(t)$ that $Y_{i,j}\geq 0$
  only for $[i,j] \in E(G)$. Let us denote this weaker
  version by $P'(t)$ and its optimal value by $C'(t) \geq 0$.
  It is an easy exercise to verify that $C'(t)$ will
  also be monotonically decreasing until it reaches 0
  at some value $t^{*}(G)$. Since the entries $Y_{i,j}$ for
  $[i,j] \notin E(G)$ are unrestricted, it is clear that
  $C'(t)=0$ for $1 \leq t \leq \vartheta(G)$.
  Argueing as before, we also find that $C'(t)>0$ if
  $t > \vartheta(G)$. Therefore, the smallest integer $k$
  such that $C'(k)=0$ is given by $k=\lceil \vartheta(G) \rceil$.
  It is remarkable that we find these conclusions 
  without actually knowing $\vartheta(G)$ or $\vartheta^{-}(G)$. 
\end{bem}

\section{Semidefinite relaxations for $s(k)$}\label{S(k)}

Before introducing the semidefinite relaxation for $s(k)$, we note that $x^{T}Ax = \langle x, Ax \rangle = \langle A, xx^T \rangle$. Furthermore, since $x \in \{0,1 \}^{n}$ means that $x_i^2 = x_i$, the diagonal of the matrix $xx^T$ must be equal to $x$. Now, by setting $X := xx^T$ we substitute the term $xx^T$ by the symmetric matrix $X$, and relax this condition to $X \succeq xx^T$. We note that the semidefiniteness constraint $X - xx^T \succeq 0$ together with $\diag(X) = x$ implies that
\begin{align*}
	e^TXe \geq (e^Tx)^2.
\end{align*}
Since we should have equality, we add the constraints
$e^{T}x=k, ~e^TXe = k^2$. Finally, by using the Schur complement
\begin{align*}
	\begin{pmatrix} X & x \\ x^T & 1 \end{pmatrix} \succeq 0 \Leftrightarrow X - xx^t \succeq 0,
\end{align*}
we arrive to the following formulation
\begin{align}
	\min \frac{1}{2} \langle A, X \rangle \colon \begin{pmatrix} X & x \\ x^T & 1 \end{pmatrix} \succeq 0, \hspace{0.5em} \diag(X) = x, \hspace{0.5em}  e^T X e = k^2,\hspace{0.5em} e^T x = k. \label{SDP_S(k)_1}
\end{align}
Unfortunately, the SDP relaxation~(\ref{SDP_S(k)_1}) is not strictly feasible, since the matrices from the feasible set
\begin{align*}
	\mathcal{F}_1 := \Big\{(X, x) \colon \begin{pmatrix} X & x \\ x^T & 1 \end{pmatrix} \succeq 0, \hspace{0.5em} \diag(X) = x,\hspace{0.5em} e^T X e = k^2, \hspace{0.5em}  e^T x = k \Big\}
\end{align*}
are singular, as shown in the following Lemma.
\begin{lem}\label{singular}
  Let $(X, x) \in \mathcal{F}_1$.
  Then $\begin{pmatrix} X & x \\ x^T & 1 \end{pmatrix}$ is singular and $Xe = k x$.
\end{lem}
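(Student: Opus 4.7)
The plan is to exhibit an explicit nonzero vector in the kernel of $M := \begin{pmatrix} X & x \\ x^T & 1 \end{pmatrix}$, which will simultaneously give singularity and the identity $Xe = kx$. The natural candidate, suggested by the constraints $e^T x = k$ and $e^T X e = k^2$, is
\begin{align*}
v := \begin{pmatrix} e \\ -k \end{pmatrix}.
\end{align*}

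The first step is to compute $v^T M v$. Expanding the quadratic form gives
\begin{align*}
v^T M v = e^T X e - k \, e^T x - k \, x^T e + k^2 = k^2 - k^2 - k^2 + k^2 = 0,
\end{align*}
where I substituted $e^T X e = k^2$ and $e^T x = k$ from the definition of $\mathcal{F}_1$.

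The second step invokes a standard fact about PSD matrices: if $M \succeq 0$ and $v^T M v = 0$, then $Mv = 0$. Since $v \neq 0$, this already proves singularity of $M$. Reading off the block structure of $Mv = 0$, the top block gives
\begin{align*}
Xe - kx = 0,
\end{align*}
which is the desired identity $Xe = kx$; the bottom block gives $x^T e - k = 0$, consistent with the constraint.

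The only mild subtlety is the PSD-kernel fact used in the second step, but it is standard: write $M = L L^T$; then $v^T M v = \|L^T v\|^2 = 0$ forces $L^T v = 0$, hence $Mv = L(L^T v) = 0$. Everything else is a direct calculation from the definitions of $\mathcal{F}_1$, so I do not anticipate any real obstacle.
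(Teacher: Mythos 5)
Your proposal is correct and follows exactly the paper's own argument: the same test vector $\begin{pmatrix} e \\ -k \end{pmatrix}$, the same computation showing the quadratic form vanishes, and the same PSD-kernel fact to conclude $Xe = kx$. The only difference is that you spell out the justification of the kernel fact via a factorization $M = LL^T$, which the paper takes for granted.
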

\begin{proof}
	We note that
	\begin{align*}
		\begin{pmatrix} e \\ -k \end{pmatrix} ^T \begin{pmatrix} X & x \\ x^T & 1 \end{pmatrix} \begin{pmatrix} e \\ -k \end{pmatrix} &= e^T X e - 2 k e^T x + k^2 = k^2 - 2k^2 + k^2 = 0. 
	\end{align*}
	Since $\begin{pmatrix} X & x \\ x^T & 1 \end{pmatrix} \succeq 0$, we have that $\begin{pmatrix} X & x \\ x^T & 1 \end{pmatrix} \begin{pmatrix} e \\ -k \end{pmatrix} = 0$. Thus, $Xe = k x$. 
\end{proof}
\begin{bem}
The set $\mathcal{F}_1$ has no interior.
\end{bem}
In view of the previous lemma, we consider the following set
\begin{align*}
	\mathcal{F}_2 \coloneqq \{(X, x) \colon X \succeq 0, \hspace{0.5em} \diag(X) = x,\hspace{0.5em}  e^T x = k, \hspace{0.5em} X e = k x  \}
\end{align*}
and show that $\mathcal{F}_1 = \mathcal{F}_2$.

\begin{lem}\label{L2}
	$\mathcal{F}_1 = \mathcal{F}_2$.
\end{lem}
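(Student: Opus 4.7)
The plan is to prove the two inclusions $\mathcal{F}_1 \subseteq \mathcal{F}_2$ and $\mathcal{F}_2 \subseteq \mathcal{F}_1$ separately. The first direction is essentially a direct consequence of the preceding lemma; the real content is in the second direction, where the PSD block condition must be derived from the seemingly weaker constraints of $\mathcal{F}_2$.

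For $\mathcal{F}_1 \subseteq \mathcal{F}_2$, I would take $(X,x) \in \mathcal{F}_1$. The matrix $X$ is a principal submatrix of a PSD matrix, so $X \succeq 0$. The conditions $\diag(X) = x$ and $e^T x = k$ are inherited verbatim. The relation $Xe = kx$ is precisely the conclusion of Lemma~\ref{singular}. Hence $(X,x) \in \mathcal{F}_2$.

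For $\mathcal{F}_2 \subseteq \mathcal{F}_1$, take $(X,x) \in \mathcal{F}_2$. The conditions $\diag(X) = x$ and $e^T x = k$ again carry over, and from $Xe = kx$ together with $e^T x = k$ we immediately obtain $e^T X e = k e^T x = k^2$. The nontrivial step is to verify that $M := \begin{pmatrix} X & x \\ x^T & 1 \end{pmatrix} \succeq 0$. The natural Schur-complement argument would require $X - xx^T \succeq 0$, which is not part of the definition of $\mathcal{F}_2$ and need not hold in general; this is the main obstacle. The trick I would use is to rewrite the quadratic form of $M$ as the quadratic form of $X$ at a shifted vector. Concretely, for an arbitrary vector $(u,\alpha)^T$ I would set $v := u + \tfrac{\alpha}{k} e$ and compute $v^T X v$ using $Xe = kx$ (so $e^T X u = k x^T u$) and $e^T X e = k^2$; the cross and quadratic-in-$\alpha$ terms then match exactly, yielding
\begin{align*}
\begin{pmatrix} u \\ \alpha \end{pmatrix}^T M \begin{pmatrix} u \\ \alpha \end{pmatrix}
= u^T X u + 2\alpha x^T u + \alpha^2 = v^T X v \geq 0,
\end{align*}
since $X \succeq 0$. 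This establishes $M \succeq 0$ and finishes the inclusion.

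The calculation for general $(u,\alpha)$ is the only delicate point; everything else is bookkeeping. Once the shift $v = u + \tfrac{\alpha}{k} e$ is identified, the two constraints $Xe = kx$ and $e^T x = k$ are exactly what is needed to make the identity go through, which is a pleasant indication that the definition of $\mathcal{F}_2$ is the ``right'' reformulation.
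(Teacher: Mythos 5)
Your proof is correct, but the second inclusion is argued by a genuinely different route than the paper's. The first direction is the same in both: invoke Lemma~\ref{singular} to obtain $Xe = kx$, and note the remaining conditions carry over. For $\mathcal{F}_2 \subseteq \mathcal{F}_1$, the paper takes a spectral route: it shows that every nullvector $v$ of $X$, padded with a zero, lies in the nullspace of $Y = \begin{pmatrix} X & x \\ x^T & 1 \end{pmatrix}$, that $\begin{pmatrix} e \\ -k \end{pmatrix}$ is a further nullvector (independent, since its last coordinate is nonzero), hence $\mathrm{rank}(Y) = \mathrm{rank}(X)$, and then concludes via eigenvalue interlacing that all nonzero eigenvalues of $Y$ are positive. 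You instead verify $Y \succeq 0$ directly on quadratic forms via the shift $v = u + \tfrac{\alpha}{k}e$; the identity uses exactly $e^T X u = k\,x^T u$ and $e^T X e = k^2$, both consequences of $Xe = kx$ and $e^T x = k$, and your computation checks out. The two arguments buy slightly different things: yours is more elementary (no interlacing theorem, no rank count) and makes transparent precisely which constraints of $\mathcal{F}_2$ are needed; the paper's makes explicit that $Y$ is singular with the same rank as $X$, which echoes Lemma~\ref{singular} and the remark that $\mathcal{F}_1$ has empty interior. The only (harmless) caveat in your version is the division by $k$, which requires $k \neq 0$ --- automatic here since $k \geq 1$ throughout.
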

\begin{proof}
	First, we show that $\mathcal{F}_1 \subseteq \mathcal{F}_2$. Let $(X, x) \in \mathcal{F}_1$. Then $X \succeq 0$, $\diag(X) = x$, $e^T x = k$ and $e^T X e = k^2$. Additionally, due to Lemma~\ref{singular} we have that $X e = k x$. Hence, $(X, x) \in \mathcal{F}_2$.
	
	Now we show that $\mathcal{F}_2 \subseteq \mathcal{F}_1$. Let $(X, x) \in \mathcal{F}_2$. Then $X \succeq 0$, $\diag(X) = x$, $e^T x =k$ and $X e = k x$. Thus, $e^T X e = k e^T x = k^2$. Furthermore, we show that 
	$\begin{pmatrix} X & x \\ x^T & 1 \end{pmatrix} \succeq 0$.
	
	Let $Y \coloneqq \begin{pmatrix} X & x \\ x^T & 1 \end{pmatrix}$. Since $X e = k x$, we have $Y = \begin{pmatrix} X & \frac{1}{k}Xe \\ \frac{1}{k}e^TX & 1 \end{pmatrix}$. Let $X v = 0$ and set $w = \begin{pmatrix} v \\ 0 \end{pmatrix}$. Then
	\begin{align*}
		Yw = \begin{pmatrix} X & \frac{1}{k}Xe \\ \frac{1}{k}e^T X & 1 \end{pmatrix} \begin{pmatrix} v \\ 0 \end{pmatrix} = \begin{pmatrix} Xv \\ \frac{1}{k} e^T X v \end{pmatrix} = \begin{pmatrix} 0 \\ 0 \end{pmatrix}. 
	\end{align*}
	Therefore, the nullspace of $X$ (extended with zero) is contained in the nullspace of $Y$. Furthermore,
	\begin{align*}
		Y \begin{pmatrix} e \\ -k \end{pmatrix} = \begin{pmatrix} X & \frac{1}{k}Xe \\ \frac{1}{k}e^T X & 1 \end{pmatrix} \begin{pmatrix} e \\ -k \end{pmatrix} = \begin{pmatrix} Xe - Xe \\ \frac{1}{k}e^T X e - k \end{pmatrix} = \begin{pmatrix} 0 \\ 0 \end{pmatrix}.
	\end{align*}
	So $\begin{pmatrix} e \\ -k \end{pmatrix}$ is in the nullspace of $Y$ and therefore rank$(X)$ = rank$(Y)$. If $X \succeq 0$, then all nonzero eigenvalues of $X$ are positive. Hence, by the interlacing property between the eigenvalues of $X$ and $Y$, we have that all nonzero eigenvalues of $Y$ are positive. Thus, $Y \succeq 0$ and $\mathcal{F}_2 \subseteq \mathcal{F}_1$.
\end{proof}

Altogether, for $t \geq 1$ we obtain the semidefinite relaxation
$Q(t)$ 
given in~(\ref{SS(k)}).
Its optimal value $S(k)$ satisfies
$$
s(k) \geq S(k) \geq 0 \mbox{ for }k \in \N.
$$
The problem $Q(t)$ is strictly feasible for $t < n$. To show this, we consider the matrix $X$ with diagonal entries equal to $\frac{t}{n}$ and all other entries equal to $\frac{t(t-1)}{n(n-1)}$. Then $X \geq 0$, $\trace(X) = t$, $X e = t \diag(X)$, and since $t < n$, we have that $X \succ 0$.

\begin{lem}
	 $S(n) = |E(G)|$. 
\end{lem}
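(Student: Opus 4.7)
The plan is to show that the feasible set of $Q(n)$ consists of a single matrix, namely $X = J$, and then read off the objective value.

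First I would check that $J$ is feasible: $\trace(J) = n$, $Je = ne$, $\diag(J) = e$, so $Je = n\diag(J)$, and $J \succeq 0$, $J \geq 0$. Then $\tfrac{1}{2}\langle A, J\rangle = |E(G)|$, which yields $S(n) \leq |E(G)|$.

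The real content is the reverse direction. Let $X$ be any feasible matrix for $Q(n)$ and write $x := \diag(X)$. From $\trace(X) = n$ we get $e^T x = n$. The key tool is the PSD-implied inequality $X_{ij} \leq \sqrt{X_{ii}X_{jj}} = \sqrt{x_i x_j}$. Summing the $i$-th row and using $Xe = n \diag(X)$ gives
\begin{align*}
n x_i \;=\; \sum_j X_{ij} \;\leq\; \sqrt{x_i}\sum_j \sqrt{x_j},
\end{align*}
so for every $i$ with $x_i > 0$ we obtain $n\sqrt{x_i} \leq \sum_j \sqrt{x_j}$. By Cauchy--Schwarz, $\sum_j \sqrt{x_j} \leq \sqrt{n \sum_j x_j} = n$, hence $x_i \leq 1$. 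If some $x_i = 0$, then $X_{ii} = 0$ together with $X \succeq 0$ forces the entire $i$-th row to vanish, but then $e^Tx = \sum_{j\neq i} x_j \leq n - 1$ contradicts $e^Tx = n$. Thus $0 < x_i \leq 1$ for all $i$, and combined with $\sum_i x_i = n$ this forces $x_i = 1$ for all $i$.

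Knowing $\diag(X) = e$, the PSD bound gives $X_{ij} \leq 1$, while the row-sum equation $Xe = ne$ forces $\sum_j X_{ij} = n$; together with $X_{ij} \geq 0$ this is possible only if $X_{ij} = 1$ for all $i,j$, i.e.\ $X = J$. Hence $S(n) = \tfrac{1}{2}\langle A, J\rangle = |E(G)|$. No step looks technically hard; the only subtlety is handling the case $x_i = 0$, which is dispatched by combining PSD with the trace constraint.
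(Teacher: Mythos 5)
Your proof is correct and follows the same route as the paper: both arguments rest on the fact that $J$ is the unique feasible matrix of $Q(n)$, so that $S(n)=\tfrac12\langle A,J\rangle=|E(G)|$. The paper simply asserts that all entries of a feasible $X$ must equal one, whereas you supply the missing justification (via the principal-minor bound $X_{ij}\le\sqrt{x_ix_j}$, the row-sum constraint and Cauchy--Schwarz), which is a welcome level of detail; note only that your separate treatment of the case $x_i=0$ is redundant, since $x_j\le 1$ holds for every $j$ anyway and $\sum_j x_j=n$ already forces $x\equiv 1$.
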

\begin{proof}
	For $t = n$ all elements in the matrix $X$
	will be equal to one, so the optimal value of the relaxation $S(n)$
	will be $\frac{1}{2} \langle A, J \rangle = |E(G)|$.
\end{proof}

\begin{lem}
	$S(1) = 0$.
\end{lem}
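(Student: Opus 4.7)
The plan is to establish the inequality $S(1)\geq 0$ from nonnegativity of the objective, and then exhibit a single feasible matrix attaining objective value $0$.

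For the lower bound, I would observe that the adjacency matrix $A$ of a simple graph has zero diagonal and nonnegative off-diagonal entries, while any feasible $X$ for $Q(1)$ satisfies $X\geq 0$. Therefore $\tfrac{1}{2}\langle A,X\rangle\geq 0$ on the feasible set, giving $S(1)\geq 0$ immediately.

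For the matching upper bound, the natural candidate is $X=\tfrac{1}{n}I_n$. I would check the four constraints of $Q(1)$ in turn: $X\succeq 0$ and $X\geq 0$ are immediate; $\trace(X)=1$ is clear; and the coupling constraint becomes $Xe=\tfrac{1}{n}e=\diag(X)=1\cdot\diag(X)$, which holds. Evaluating the objective yields $\tfrac{1}{2}\langle A,X\rangle=\tfrac{1}{2n}\trace(A)=0$, so $S(1)\leq 0$.

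There is no real obstacle here. In fact, one can note the stronger structural fact that at $t=1$ the constraint $Xe=\diag(X)$ together with $X\geq 0$ forces every off-diagonal row entry to vanish (since the $i$-th row sum equals $X_{ii}$ while all summands are nonnegative), so every feasible $X$ is diagonal and the objective is automatically $0$. This observation is optional; the short argument above via the explicit witness $X=\tfrac{1}{n}I_n$ already gives $S(1)=0$.
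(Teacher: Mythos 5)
Your proposal is correct. The paper's own proof is precisely your ``optional'' closing observation: at $t=1$ the constraint $Xe=\diag(X)$ together with $X\geq 0$ forces every off-diagonal entry of a feasible $X$ to vanish (the $i$-th row sum of nonnegative entries equals the diagonal entry $X_{ii}$), so $\langle A,X\rangle=0$ for \emph{every} feasible $X$ and hence $S(1)=0$. Your primary route is a genuinely different, equally valid argument: you bound $S(1)\geq 0$ from $A\geq 0$ and $X\geq 0$, and then exhibit the explicit witness $X=\tfrac{1}{n}I_n$, which is exactly the strictly feasible point the paper uses earlier (the matrix with diagonal $t/n$ and off-diagonal $t(t-1)/(n(n-1))$, specialized to $t=1$). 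The witness argument has the small advantage of making nonemptiness of the feasible set explicit, which the paper's two-line proof tacitly relies on (it is established separately in the strict-feasibility remark); the paper's argument has the advantage of yielding the stronger structural fact that the feasible set of $Q(1)$ consists only of diagonal matrices. Either way the lemma is fully proved.
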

\begin{proof}
	For $t = 1$ we have that $Xe = \diag(X)$. Since $X \geq 0$, all off-diagonal entries in the matrix $X$ are zero, and therefore $S(1) = \frac{1}{2}\langle A, X \rangle = 0$.
\end{proof}

Next we show monotonicity of $S(t)$.
The monotonicity of $C(t)$ was an easy consequence of the fact that
feasible matrices for $P(t')$ are directly available from feasible
matrices for $P(t)$ for any $t > t' \geq 1$. To show monotonicity of
$S(t)$, we need a similar construction to produce feasible solutions
$X'$ for $Q(t')$ from solutions $X$ for $Q(t)$ with $t'<t$.

\begin{lem}\label{feasible}
  Let $1<t'<t$ be given and suppose $X \in Q(t)$ with
  $x=diag(X)$.
  Set $\alpha=\frac{t'(t'-1)}{t(t-1)}, ~
  \beta= \frac{t'(t-t')}{t(t-1)}$. Then
  $X':=\alpha X+ \beta Diag(x) \in Q(t')$.
\end{lem}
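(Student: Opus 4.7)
The plan is to verify directly the four defining conditions of $Q(t')$, namely $X' \succeq 0$, $X' \geq 0$, $\trace(X') = t'$, and $X'e = t' \diag(X')$, by plugging in $X' = \alpha X + \beta \Diag(x)$ and exploiting the corresponding properties of $X$. Since $t > t' > 1$, both coefficients $\alpha$ and $\beta$ are strictly positive. The nonneg entrywise condition $X' \geq 0$ and the semidefinite condition $X' \succeq 0$ then follow immediately: $X$ satisfies both by assumption, and $\Diag(x)$ is nonnegative diagonal (hence PSD and entrywise nonnegative) because $x = \diag(X) \geq 0$ follows from $X \succeq 0$ (or from $X \geq 0$).

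Next I would compute the trace. Using $\trace(\Diag(x)) = e^T x = \trace(X) = t$, I get $\trace(X') = \alpha t + \beta t = (\alpha+\beta)t$; plugging in the values of $\alpha$ and $\beta$ collapses this to $t'$ after a short algebraic simplification (the factor $t-1$ in the denominator cancels with $t' - 1 + t - t' = t - 1$ in the numerator).

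The most delicate step is the equation $X' e = t' \diag(X')$, because this requires checking an identity between two vectors rather than two scalars. The key observation is that both sides are scalar multiples of $x$: on one hand, $\diag(X') = \alpha x + \beta x = (\alpha+\beta)x$, while on the other hand, using $Xe = tx$ and $\Diag(x)e = x$, one gets $X'e = (\alpha t + \beta)x$. So the required identity reduces to the scalar relation $\alpha t + \beta = t'(\alpha+\beta)$. Substituting the definitions of $\alpha$ and $\beta$, both sides simplify to $t'^2/t$, completing the verification.

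The main (mild) obstacle is bookkeeping the two parameters $\alpha$ and $\beta$ so that both the trace condition and the row-sum condition come out correctly; indeed, these two requirements over-determine a two-parameter family, and the whole point of the chosen values of $\alpha, \beta$ is to satisfy them simultaneously. No deeper ingredient (such as Schur complements or eigenvalue interlacing) is needed here, because the PSD property of $X'$ is obtained for free from convex combination.
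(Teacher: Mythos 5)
Your proposal is correct and follows essentially the same route as the paper: both reduce the membership conditions to the two scalar relations $(\alpha+\beta)t=t'$ and $\alpha t+\beta=t'(\alpha+\beta)$ using $Xe=tx$ and $\diag(\Diag(x))=x$; the paper derives $\alpha,\beta$ as the unique solution of these equations, while you verify that the stated values satisfy them. Your version is in fact slightly more complete, since you also make explicit that $\alpha,\beta>0$ for $1<t'<t$ and hence that $X'\succeq 0$ and $X'\geq 0$, checks the paper leaves implicit.
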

\begin{proof}
  Let $X \in Q(t)$ with $x=diag(X)$.
  Set $X':= \alpha X + \beta Diag(x)$. We have to select $\alpha>0$
  and $\beta>0$ such that $X' \in Q(t')$ meaning that
  $trace(X')=t', X'e=t'x'$ and $x'=\diag(X')$. The first condition
  provides the following linear equation in $\alpha$ and $\beta$:
  $$t' = \alpha t + \beta t. $$
  Next note that $X'e=(\alpha t+ \beta)x$ and $x'=(\alpha+\beta)x$
  so that the second condition becomes
  $$
  X'e = (\alpha t + \beta)x = \frac{\alpha t + \beta}{\alpha +
    \beta}x' = t'x'.
  $$
  This results in a second linear equation in $\alpha$ and $\beta$
  $$\alpha t + \beta = t'(\alpha + \beta).$$
  The unique solution is given as stated in the lemma. 
\end{proof}

\begin{lem}\label{monotonic}
	The function $S(t)$ is monotonically increasing for $t \geq
        1$. Moreover, if $S(t')>0$, then $S$ is strictly monotonically
        increasing for all $t>t'$. 
\end{lem}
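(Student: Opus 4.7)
The plan is to construct feasible points of $Q(t')$ from feasible points of $Q(t)$ via the explicit recipe in Lemma~\ref{feasible}, and then track how the objective value transforms under that construction. The crucial observation is that the adjacency matrix $A$ has zero diagonal (since $G$ is simple), so $\langle A, \Diag(x) \rangle = 0$ for every vector $x$.

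For plain monotonicity, I first dispose of the boundary $t' = 1$: since $S(1) = 0$ and $S(t) \geq 0$ for all $t \geq 1$, we have $S(1) \leq S(t)$ trivially. For $1 < t' < t$, let $X$ be optimal for $Q(t)$, set $x = \diag(X)$, and form $X' = \alpha X + \beta \Diag(x)$ with $\alpha,\beta$ from Lemma~\ref{feasible}. That lemma guarantees $X' \in Q(t')$, and the zero-diagonal observation yields
\begin{align*}
S(t') \;\leq\; \tfrac{1}{2} \langle A, X' \rangle \;=\; \alpha \cdot \tfrac{1}{2} \langle A, X \rangle \;=\; \alpha\, S(t).
\end{align*}
Because $\alpha = \tfrac{t'(t'-1)}{t(t-1)} < 1$ whenever $1 < t' < t$ and $S(t) \geq 0$, we conclude $S(t') \leq \alpha S(t) \leq S(t)$, establishing monotonicity.

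For the strict part, suppose $S(t') > 0$; note $t' > 1$ is automatic since $S(1) = 0$. For any $t > t'$ the display above rearranges to $S(t) \geq S(t')/\alpha > S(t')$, since $\alpha < 1$. This already shows $S(t) > S(t')$ for every $t > t'$. To upgrade this into strict monotonicity on the whole ray $(t', n]$, observe that any intermediate $t_1 > t'$ inherits $S(t_1) > S(t') > 0$, so the same argument applied with $t_1$ in place of $t'$ gives $S(t_2) > S(t_1)$ for every $t_2 > t_1$.

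No step presents a genuine obstacle once Lemma~\ref{feasible} is in hand; the only substantive ingredient beyond it is the zero-diagonal identity $\langle A, \Diag(x) \rangle = 0$, without which the contribution of the $\beta \Diag(x)$ term would spoil the clean scaling of the objective by $\alpha$.
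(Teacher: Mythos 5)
Your proposal is correct and follows essentially the same route as the paper: apply the construction of Lemma~\ref{feasible}, use $\langle A, \Diag(x)\rangle = 0$ to see that the objective scales by $\alpha = \tfrac{t'(t'-1)}{t(t-1)} < 1$, and deduce both monotonicity and strictness from that scaling. Your treatment of the boundary case $t'=1$ and the explicit bootstrapping of strictness along the whole ray are slightly more careful than the paper's terse final sentence, but the argument is the same.
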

\begin{proof}
Let $X' \in Q(t')$ be derived from $X \in Q(t)$ as in the previous
lemma. Then $\langle A, \Diag(x) \rangle = 0$ and we get
	\begin{align*}
 \frac{1}{2} \langle A, X' \rangle =
          \frac{1}{2}\frac{t'(t'-1)}{t(t-1)}\langle A, X \rangle
          \leq \frac{1}{2}\frac{t(t-1)}{t(t-1)} \langle A, X \rangle
          = \frac{1}{2} \langle A, X \rangle. 
	\end{align*}
        Any $X \in Q(t)$ generates a feasible matrix $X' \in Q(t')$
        so that $S(t') \leq S(t)$. 
	Finally $S(t')>0$ implies that the last inequality is strict.
\end{proof}

%\begin{cor}\label{corollary_lessequal}
%	Let $A$ be the adjacency matrix of a graph $G$. Let $k > 1$ and suppose $\val(S(k)) = 0$. Then $\val(S(k')) = 0$ for all $1 < k' \leq k$. 
%\end{cor}
%
%\begin{proof}
%	This follows immediately from Lemma~\ref{monotonic}.
%\end{proof}
%\begin{bem}\label{critical_value}
%	Let $A$ be the adjacency matrix of a graph $G$. Then there exists a unique critical value $k^*$ such that $\val(S(k)) > 0$ for all $k > k^*$ and $\val(S(k)) = 0$ for $k \leq k^*$. 
% \end{bem}
We summarize the main properties of $S(t)$ for $t \geq 1$ as follows.
$S(1)=0$ and $S(n) >0$ for nonempty graphs. Moreover, $S(t)$ is
monotonically increasing. Therefore, as in the case for $C(t)$, there
exists some value $t^{*}(G)>1$ such that $S(t)= 0$ for $t\leq t^{*}$
and $S(t)>0$ for $t>t^{*}(G)$. We show next, that this value is equal
to Schrijver's refinement of $\vartheta(G)$ towards the stability
number of $G$, given as follows, see
Schrijver~\cite{Schrijver}.
\begin{align}
	\vartheta^+ &= \max \textrm{tr}(X) \colon X - xx^t \succeq 0, \hspace{0.5em} \textrm{diag}(X) = x, \hspace{0.5em} X_{i,j} = 0 \hspace{0.5em} \forall [i,j] \in E(G), \hspace{0.5em} X \geq 0 \label{Schr1}, \\
	&= \max \langle J, X \rangle \colon X \succeq 0, \hspace{0.5em} \textrm{tr}(X) = 1, \hspace{0.5em} X_{i,j} = 0 \hspace{0.5em} \forall [i,j] \in E(G), \hspace{0.5em} X \geq 0. \label{Schr2}
\end{align}

The fact that (\ref{Schr1}) and (\ref{Schr2}) are equivalent can be found in~\cite{LovSch}.

\begin{lem}\label{link_1}
	Let $X^*$ be optimal for~(\ref{Schr1}). Then $X^*e = \vartheta^+ \diag(X^*)$. 
\end{lem}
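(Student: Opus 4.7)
The plan is to exploit the known equivalence between (\ref{Schr1}) and (\ref{Schr2}) to pin down the value of $e^{T}X^{*}e$ exactly, and then use the positive semidefiniteness of $X^{*}-x^{*}(x^{*})^{T}$ to force $e$ into its nullspace.

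Concretely, write $x^{*}=\diag(X^{*})$ and note that $\trace(X^{*})=e^{T}x^{*}=\vartheta^{+}$. The Schur-complement form of (\ref{Schr1}) gives $X^{*}-x^{*}(x^{*})^{T}\succeq 0$, so evaluating this PSD matrix on $e$ yields the lower bound
\begin{align*}
e^{T}X^{*}e \;\geq\; (e^{T}x^{*})^{2} \;=\; (\vartheta^{+})^{2}.
\end{align*}
For the matching upper bound, I would scale $X^{*}$ so that it becomes feasible for (\ref{Schr2}): the matrix $Y:=X^{*}/\vartheta^{+}$ clearly satisfies $Y\succeq 0$, $Y\geq 0$, $Y_{i,j}=0$ for $[i,j]\in E(G)$, and $\trace(Y)=1$. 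Since $\vartheta^{+}$ is the optimal value of (\ref{Schr2}), we must have $\langle J,Y\rangle\leq \vartheta^{+}$, which translates to $e^{T}X^{*}e\leq (\vartheta^{+})^{2}$.

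Combining the two inequalities gives $e^{T}X^{*}e=(\vartheta^{+})^{2}$, so that
\begin{align*}
e^{T}\bigl(X^{*}-x^{*}(x^{*})^{T}\bigr)e \;=\; (\vartheta^{+})^{2}-(e^{T}x^{*})^{2} \;=\; 0.
\end{align*}
Because $X^{*}-x^{*}(x^{*})^{T}\succeq 0$, the vanishing of this quadratic form on $e$ forces $e$ to lie in the nullspace, i.e.\ $(X^{*}-x^{*}(x^{*})^{T})e=0$. Rearranging yields $X^{*}e=(e^{T}x^{*})\,x^{*}=\vartheta^{+}x^{*}=\vartheta^{+}\diag(X^{*})$, as claimed.

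The only subtle step is the two-sided sandwiching of $e^{T}X^{*}e$; once that is in place, the conclusion is immediate from the standard fact that a PSD matrix annihilates every vector on which its quadratic form vanishes. The argument tacitly uses $\vartheta^{+}>0$, which is harmless since otherwise $G$ has no vertices and the claim is vacuous.
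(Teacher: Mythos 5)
Your proof is correct and follows essentially the same route as the paper: scale $X^{*}$ by $1/\vartheta^{+}$ to get feasibility for~(\ref{Schr2}) and hence $e^{T}X^{*}e\leq(\vartheta^{+})^{2}$, use $X^{*}-x^{*}(x^{*})^{T}\succeq 0$ for the reverse inequality, and conclude that $e$ lies in the nullspace of that PSD matrix. No substantive differences from the paper's argument.
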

\begin{proof}
	We set $Y^* \coloneqq \frac{1}{\vartheta^+}X^*$. Then $Y^*$ is feasible for (\ref{Schr2}), and therefore $\langle J, Y^* \rangle~\leq~\vartheta^+$ and $\langle J, X^* \rangle \leq (\vartheta^+)^2$. Since $X^* - \diag(X^*)\diag(X^*)^T \succeq 0$ we have $\langle J, X^* \rangle \geq (e^T \diag(X^*))^2 = (\vartheta^+)^2$, and thus altogether $\langle J, X^* \rangle =  (\vartheta^+)^2$. This implies 
	\begin{align*}
		e^T(X^* - \diag(X^*)(\diag(X^*))^T)e = 0,
	\end{align*}
	and since $X^* - \diag(X^*)\diag(X^*)^T \succeq 0$, we have $(X^* - \diag(X^*)\diag(X^*)^T)e = 0$, showing that $X^*e = \vartheta^+\diag(X^*)$ holds.
\end{proof}

With these results we are now able to link the optimal value of the SDP relaxation $S(k)$ with the Schrijver relaxation.

\begin{thm}\label{thm_S(k)}
	Let $A$ be the adjacency matrix of a graph $G$. Then 
	\begin{align*}
		S(t) > 0 \textrm{ if and only if } t > \vartheta^+(G).
	\end{align*}
\end{thm}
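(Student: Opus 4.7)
My plan is to prove the two directions separately, using the equivalence of formulations (\ref{Schr1}) and (\ref{Schr2}) for $\vartheta^+$ and the properties already established for $Q(t)$.

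For the direction $t \leq \vartheta^+(G) \Rightarrow S(t) = 0$, I would first show that $S(\vartheta^+) = 0$. Take $X^*$ optimal for (\ref{Schr1}). Then $\diag(X^*) = x^*$, $\trace(X^*) = e^T x^* = \vartheta^+$, $X^* \succeq 0$, $X^* \geq 0$, and $X^*_{i,j} = 0$ for every edge $[i,j]$. Lemma \ref{link_1} gives $X^* e = \vartheta^+ \diag(X^*)$, which together with $\trace(X^*) = \vartheta^+$ shows that $X^*$ is feasible for $Q(\vartheta^+)$. Since $X^*$ vanishes on all edges, the objective $\frac{1}{2}\langle A, X^* \rangle$ equals $0$. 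Hence $S(\vartheta^+) = 0$, and monotonicity (Lemma \ref{monotonic}) extends this to $S(t) = 0$ for every $1 \leq t \leq \vartheta^+$.

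For the reverse direction $t > \vartheta^+(G) \Rightarrow S(t) > 0$, I would argue by contradiction. Suppose $S(t) = 0$ for some $t > \vartheta^+$, so there exists a feasible $X \in Q(t)$ with $\langle A, X \rangle = 0$. Since $X \geq 0$ and $A \geq 0$, this forces $X_{i,j} = 0$ for every edge $[i,j] \in E(G)$. Now set $\tilde{X} := \frac{1}{t} X$. Then $\tilde{X} \succeq 0$, $\tilde{X} \geq 0$, $\trace(\tilde{X}) = 1$, and $\tilde{X}_{i,j} = 0$ on edges, so $\tilde{X}$ is feasible for the formulation (\ref{Schr2}) of $\vartheta^+$. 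Using $Xe = t \diag(X)$ and $\trace(X) = t$, I can compute $e^T X e = t\, e^T \diag(X) = t \cdot \trace(X) = t^2$, so the objective value is
\begin{align*}
	\langle J, \tilde{X} \rangle = \frac{1}{t} e^T X e = t.
\end{align*}
This yields $\vartheta^+ \geq t$, contradicting $t > \vartheta^+$.

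The bulk of the work is routine once one has Lemma \ref{link_1} (which supplies the crucial identity $X^* e = \vartheta^+ \diag(X^*)$ needed to map Schrijver-feasible solutions into $Q(\vartheta^+)$) and the constraint $Xe = t\diag(X)$ built into $Q(t)$ (which is exactly what allows a zero-objective solution of $Q(t)$ to be rescaled into a Schrijver-feasible solution with objective $t$). The only subtle point is recognizing that these two constraints are precise duals of one another via the normalization $\tilde{X} = X/t$; once this symmetry is noticed, both implications fall out cleanly and no additional convex-analysis machinery is required.
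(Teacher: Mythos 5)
Your proof is correct and follows the paper's overall strategy: establish $S(\vartheta^+(G))=0$ by mapping a Schrijver-optimal $X^*$ into $Q(\vartheta^+(G))$ via Lemma~\ref{link_1}, extend to $t\leq\vartheta^+(G)$ by the monotonicity of Lemma~\ref{monotonic}, and then rule out $S(t)=0$ for $t>\vartheta^+(G)$. The first direction is essentially identical to the paper's. In the second direction you diverge slightly: the paper argues via formulation~(\ref{Schr1}), asserting that $\vartheta^+(G)$ bounds the trace of any matrix that is doubly nonnegative and vanishes on edges --- a claim that, read literally, is too strong (take $X=I$) and implicitly relies on the equivalence $\mathcal{F}_1=\mathcal{F}_2$ of Lemma~\ref{L2} to supply the missing constraint $X-xx^T\succeq 0$. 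You instead normalize a zero-objective feasible point of $Q(t)$ to $\tilde{X}=X/t$ and check it is feasible for the trace-one formulation~(\ref{Schr2}) with objective $\langle J,\tilde{X}\rangle = \frac{1}{t}e^TXe = t$, using only the constraints explicitly present in~(\ref{SS(k)}). This is a clean and self-contained variant that sidesteps Lemma~\ref{L2} entirely and is arguably more rigorous than the paper's terse final sentence; the paper's route, in exchange, stays entirely within formulation~(\ref{Schr1}) and reuses machinery already developed in Section~\ref{S(k)}. One implicit point in both arguments (that $S(t)=0$ is attained by some feasible $X$) is harmless, since the feasible set of $Q(t)$ is compact.
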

\begin{proof}
	According to the definition of $S(t)$ and setting $t = \vartheta^+(G)$, we have
	\begin{align*}
		S(\vartheta^+(G)) &= \min \frac{1}{2} \langle A, X \rangle \colon X\succeq 0, \hspace{0.5em} X \geq 0, \hspace{0.5em} \trace(X) = \vartheta^+(G), \hspace{0.5em} X e = \vartheta^+(G)\diag(X).
	\end{align*}
	Let $X$ be optimal for (\ref{Schr1}). Then $\trace(X) = \vartheta^+(G)$, $X - \diag(X)\diag(X)^T \succeq 0$, $X_{i,j} = 0$ for all $[i,j] \in E(G)$ and $X_{i,j} \geq 0$ for all $[i,j] \notin E(G)$. According to Lemma~\ref{link_1} we also have $Xe = \vartheta^+(G)\diag(X)$. Thus, $X$ is feasible for (\ref{S(k)}), and since $X_{i,j} = 0$ on $E(G)$, we conclude that $S(\vartheta^+(G)))= 0$.
	
	Now assume that $t' < t = \vartheta^+(G)$. Then according to Lemma~\ref{monotonic} we have that $S(t') = 0$.
	
	Furthermore, by considering the problem~(\ref{Schr1}), we note that $\vartheta^+(G)$ is the largest possible value for the trace of the matrix $X$ which satisfies $X \succeq 0$, $X \geq 0$, $X_{i,j} = 0$ for all $[i,j] \in E(G)$, and therefore $S(t) > 0$ for all $t > \vartheta^+(G)$.
\end{proof}

\begin{cor}
	The largest $k \in \mathbb{N}$ with $S(k) = 0$ is given by $k = \lfloor \vartheta^+(G) \rfloor$.
\end{cor}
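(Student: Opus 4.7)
The plan is to derive the corollary as a direct consequence of Theorem~\ref{thm_S(k)}, which already characterizes the threshold for $S(t)=0$ sharply. Since Theorem~\ref{thm_S(k)} establishes that $S(t)>0$ iff $t>\vartheta^+(G)$, taking the contrapositive gives $S(t)=0$ iff $t\leq\vartheta^+(G)$ (for $t$ in the feasible range $t\geq 1$).

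With this equivalence in hand, I would restrict attention to integer arguments $k\in\mathbb{N}$ and simply identify the largest integer satisfying $k\leq \vartheta^+(G)$. By definition of the floor function this is exactly $\lfloor\vartheta^+(G)\rfloor$, so the corollary follows immediately. For completeness I would also note that $\vartheta^+(G)\geq 1$ (since the single-vertex sets are stable), so the floor is at least $1$ and the statement is non-vacuous; and that for $k=\lfloor\vartheta^+(G)\rfloor$ itself, feasibility of $Q(k)$ is guaranteed (for example by the strictly feasible matrix used before Lemma~\ref{monotonic}, provided $k<n$, while the case $k=n$ only arises on empty graphs where the statement is trivial).

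There is essentially no obstacle here, since all the work has been done in Theorem~\ref{thm_S(k)}; the only mild subtlety is to make sure the equivalence from the theorem is applied in the correct direction and that the boundary case $t=\vartheta^+(G)$ is handled. The theorem's proof explicitly constructs a feasible $X$ for $Q(\vartheta^+(G))$ with objective $0$, so equality at the threshold is covered and the floor is indeed attained.
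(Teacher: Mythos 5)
Your proposal is correct and matches the paper's intent exactly: the paper states this corollary without proof as an immediate consequence of Theorem~\ref{thm_S(k)}, and your argument (contrapositive of the theorem, then take the largest integer $k\leq\vartheta^{+}(G)$, which is $\lfloor\vartheta^{+}(G)\rfloor$) is precisely the reasoning the authors leave implicit. Your extra remarks on attainment at the threshold and non-vacuousness are fine but not needed.
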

Finally, we give a statement regarding the relationship between $S(k)$ and the stability number of a graph.
\begin{thm}\label{L6}
	Let $A$ be the adjacency matrix of a graph $G$. If $S(t)> 0$, then $\alpha(G) \leq \lfloor t \rfloor$.
\end{thm}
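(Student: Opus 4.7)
The plan is to combine Theorem~\ref{thm_S(k)} with the well-known Schrijver bound $\alpha(G) \leq \vartheta^+(G)$. Concretely, if $S(t)>0$ then Theorem~\ref{thm_S(k)} gives $t > \vartheta^+(G) \geq \alpha(G)$, and since $\alpha(G)$ is a nonnegative integer, $t > \alpha(G)$ implies $\lfloor t \rfloor \geq \alpha(G)$, which is the claim.

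To keep the argument self-contained, I would prefer to avoid citing $\alpha(G) \leq \vartheta^+(G)$ directly and instead exhibit a feasible solution of $Q(\alpha(G))$ with objective value zero. Let $S$ be a maximum stable set of $G$ and let $\chi_S \in \{0,1\}^n$ be its characteristic vector, so $e^T \chi_S = \alpha(G)$. Set
\begin{align*}
X := \chi_S \chi_S^T.
\end{align*}
Then $X \succeq 0$, $X \geq 0$, $\diag(X) = \chi_S$, $\trace(X) = \alpha(G)$, and
\begin{align*}
X e = \chi_S (\chi_S^T e) = \alpha(G)\, \chi_S = \alpha(G)\, \diag(X),
\end{align*}
so $X$ is feasible for $Q(\alpha(G))$. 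Because $S$ is stable, $\chi_S^T A \chi_S = 0$, hence $\langle A, X\rangle = 0$. This shows $S(\alpha(G)) = 0$.

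Now I invoke the monotonicity of $S$ established in Lemma~\ref{monotonic}: $S$ is monotonically increasing on $[1,n]$, so $S(t') \leq S(\alpha(G)) = 0$ for every $1 \leq t' \leq \alpha(G)$. Therefore the hypothesis $S(t) > 0$ forces $t > \alpha(G)$, and since $\alpha(G) \in \mathbb{N}$ this yields $\alpha(G) \leq \lfloor t \rfloor$. The only step requiring any care is checking that the rank-one matrix $\chi_S \chi_S^T$ satisfies the nonstandard constraint $Xe = t\,\diag(X)$ with $t = \alpha(G)$, which the short computation above handles; everything else is a direct application of the already-proved monotonicity.
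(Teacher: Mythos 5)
Your proof is correct and follows essentially the same route as the paper: both arguments build the rank-one matrix $xx^T$ from the characteristic vector of a stable set, verify it is feasible for $Q$ at the corresponding integer with objective value $0$, and then invoke the monotonicity of $S$ from Lemma~\ref{monotonic}. The only cosmetic difference is that the paper argues by contradiction with a hypothetical stable set of size $\lfloor t\rfloor+1$, whereas you work directly with a maximum stable set of size $\alpha(G)$.
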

\begin{proof}
	We set $k = \lfloor t \rfloor + 1$. If there is a stable set of size $k$, then there exist $x \in \{0, 1\}^n$ such that $e^T x = k$ and $x^T A x = 0$. Now set $X := xx^T$ and $\diag(X) = x$. Then $\langle A, X \rangle = 0$, $X \geq 0$, $X \succeq 0$ and $\trace(X) = k$. Furthermore
	\begin{align*}
		X e = x x^T e = kx = k \diag(X).
	\end{align*}
	Thus, $S(k) = 0$, and according to Lemma~\ref{monotonic} we have $S(t) = 0$, which contradicts our assumption.
\end{proof}

\begin{bem}\label{Remark_Lovasz_S}
	A weaker relaxation for $s(k)$ is obtained by
	requiring in problem $Q(t)$ that $X_{i,j}\geq 0$
	only for $[i,j] \in E(G)$. We denote this weaker
	version by $Q'(t)$ and its optimal value by $S'(t) \geq 0$.
	One can easily verify that $S'(t)$ is monotonically increasing with
	$S'(t) = 0$ for $1 \leq t \leq \vartheta(G)$, and $S'(t) > 0$ if
	$t > \vartheta(G)$. Therefore, the largest integer $k$
	such that $S'(k) = 0$ is given by $k=\lfloor \vartheta(G) \rfloor$.
\end{bem}

\section{Strengthening of $Q(t)$ and $P(t)$}\label{Strengthen_S(k)}

First, we consider strengthening of the semidefinite relaxation $Q(t)$. For this purpose, we assume that we have found the largest $k \in \mathbb{N}$ such that $S(k) = 0$. 

A standard way to strengthen a stable set relaxation is to add cutting planes such as
triangle or odd circuit inequalities, see for example
\cite{BJR:89}, \cite{Duk:07}
and \cite{Gru:03}. 
More recently it has been suggested by 
Adams, Anjos, Rendl and Wiegele in \cite{AARW:15} to
require that for certain subsets $I \subseteq V(G)$ the corresponding
submatrix $X_{I}$ should be contained in the stable set polytope
restricted to the subgraph $G_{I}$. 
To be specific we introduce the set of all stable set vectors $S(G)$, the stable set polytope $\mathrm{STAB}(G)$, as well as the squared stable set polytope $\mathrm{STAB}^2(G)$ as follows:
\begin{align*}
	S(G) &\coloneqq \{s \in \{0, 1\}^n \colon s_is_j = 0 \quad \forall [i,j] \in E(G)\}, \\
	\mathrm{STAB}(G) &\coloneqq \textrm{conv} \{s \colon s \in S(G)\}, \\
	\mathrm{STAB^2}(G) &\coloneqq \textrm{conv} \{ss^T \colon s \in S(G)\}.
\end{align*}
The approach from~\cite{AARW:15}  asks to consider
all subsets $I \subseteq V(G)$ of increasing sizes and require that
$X_{I} \subseteq \mathrm{STAB}^{2}(G_{I})$. 
A practical
implementation of this approach is described in~\cite{Gaa:18} and
\cite{Gaa:20}.

In our approach we consider subsets $I$ which are based on 
cliques. 
As the first step, we show that for a clique $C$ it follows automatically from the relaxation $Q(t)$ that $X_C \in \mathrm{STAB^2}(G_C)$. After that we focus on subgraphs which are induced by a clique $C$ and a vertex $v$ which is not adjacent to all vertices in $C$, and derive necessary and sufficient conditions for $X_{C \cup v} \in \mathrm{STAB}^2(G_{C \cup v})$. Finally, we do the same for subgraphs whose vertices can be partitioned into two cliques $C_1$ and $C_2$, such that $C_1 \cap C_2 = \emptyset$, and such that $C_1 \cup C_2$ do not form a new clique $C_3$.

\begin{lem}\label{L3} 
	Let $C  = \{1, \dots, k\}$ be a clique. Let
	\begin{align*}
		X_C = \begin{pmatrix} x_1 & 0 & \dots & 0 \\ 0 & x_2 & \dots & 0 \\ \vdots & \vdots & \ddots & \vdots \\ 0 & 0 & \dots & x_k
		\end{pmatrix} \succeq 0.
	\end{align*}
	Then $X_C \in \mathrm{STAB}^2(G_C)$.
\end{lem}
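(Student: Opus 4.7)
The plan is to give an explicit description of $\mathrm{STAB}^{2}(G_C)$ using the clique structure and then to exhibit $X_C$ as a convex combination of its extreme points.

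First I would observe that because $C$ is a clique, the induced subgraph $G_C$ is a complete graph, so every stable set of $G_C$ has at most one element. Hence
\[
S(G_C) = \{\mathbf{0}, e_1, e_2, \ldots, e_k\},
\]
and the corresponding outer products appearing in the definition of $\mathrm{STAB}^{2}(G_C)$ are $\mathbf{0}$ together with $E_1, \ldots, E_k$. Therefore
\[
\mathrm{STAB}^{2}(G_C) \;=\; \mathrm{conv}\{\mathbf{0}, E_1, \ldots, E_k\},
\]
which coincides with the set of diagonal $k \times k$ matrices $\mathrm{Diag}(\lambda)$ with $\lambda \geq 0$ and $e^{T}\lambda \leq 1$.

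Second, from the hypothesis that $X_C$ is diagonal and $X_C \succeq 0$ I read off $x_i \geq 0$, so the expansion $X_C = \sum_{i=1}^{k} x_i E_i$ is already a nonnegative combination of the extreme points; it only remains to verify that $\sum_i x_i \leq 1$.

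The expected main obstacle is precisely this last inequality: it cannot follow from $X_C \succeq 0$ alone (take for instance $x_1 = \cdots = x_k = 2$), so the argument has to invoke the broader context in which $X_C$ arises as the principal submatrix on $C$ of a matrix $X$ feasible for $Q(t)$ with objective value zero. In that setting $X_{ij}=0$ for every edge of $G$, which is exactly why $X_C$ is diagonal in the first place, and Lemma~\ref{L2} supplies $X \succeq xx^{T}$ with $x = \diag(X)$. Testing this PSD inequality against the clique indicator $\sum_{i \in C} e_i$ collapses the cross terms (they all lie on edges of $G$) and gives
\[
\sum_{i \in C} x_i \;\geq\; \Bigl(\sum_{i \in C} x_i\Bigr)^{2},
\]
which forces $\sum_{i \in C} x_i \leq 1$ and completes the decomposition of $X_C$ as a member of $\mathrm{STAB}^{2}(G_C)$.
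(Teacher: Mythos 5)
Your proposal is correct and follows essentially the same route as the paper: both arguments note that the stable sets of a clique are the singletons and the empty set, so membership in $\mathrm{STAB}^2(G_C)$ reduces to $\sum_{i\in C} x_i \le 1$, and both obtain this inequality from positive semidefiniteness of the bordered matrix coming from $\mathcal{F}_1$ (the paper tests $\bigl(\begin{smallmatrix} X_C & x \\ x^T & 1\end{smallmatrix}\bigr)\succeq 0$ against $(1,\dots,1,-1)$, you test the Schur complement $X-xx^T\succeq 0$ against the clique indicator, which is the same computation up to a cosmetic reformulation). Your explicit remark that the hypothesis as literally stated is insufficient and that the feasibility context must be invoked is accurate and matches the paper's (tacit) use of $(X,x)\in\mathcal{F}_1$.
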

\begin{proof}
  We use the fact that $(X,x) \in F_{1}$. This implies that 
\begin{align*}
	 \begin{pmatrix} x_1 & \dots & 0 & x_{1} \\ \vdots & \ddots & \vdots & \vdots \\ 0 & \dots & x_k & x_{k}\\ x_{1} & \dots & x_{k} &  1 
\end{pmatrix} \succeq 0.
\end{align*}
Pre- and postmultiplying with $(1, \ldots, 1, -1)$ shows that
$\sum_{i = 1}^k x_{i} \leq 1$. Since the only possible stable sets are singletons or empty set, the statement holds.
\end{proof}

Now we consider subgraphs which contain only a clique $C$ and a vertex $v$ which is not adjacent to all vertices in $C$.

\begin{lem}\label{L4}
	Let $I  = \{1, \dots, k+1\}$ such that vertices \{1, \dots, k\} form a clique $C$, and vertex $k+1$ is not adjacent to all vertices in $C$. Let
	\begin{align*}
		X_I = \begin{pmatrix} x_1 & \dots & 0 & X_{1,k+1} \\ \vdots & \ddots & \vdots & \vdots \\ 0 & \dots & x_k & X_{k,k+1}\\ X_{1,k+1} & \dots & X_{k,k+1} &  x_{k+1} 
		\end{pmatrix} \succeq 0,
	\end{align*}
	and $X_I \geq 0$. Then $X_I \in \mathrm{STAB}^2(G_I)$ if and only if
	\begin{subequations}
		\begin{alignat}{2}
			X_{i, k + 1} &\leq x_i &&\quad 1 \leq i \leq k \label{R2}\\
			\sum_{i = 1}^k X_{i,k+1} &\leq x_{k+1} &&\label{R3}\\
			\sum_{i = 1}^{k+1} x_i &\leq 1 + \sum_{i = 1}^k X_{i,k+1}. &&\label{R4}
		\end{alignat}
	\end{subequations}
\end{lem}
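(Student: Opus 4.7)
My plan is to characterize $\mathrm{STAB}^2(G_I)$ by direct enumeration of stable sets and to read off a matching convex combination. Let $M\subseteq C$ denote the vertices \emph{not} adjacent to $k+1$ (nonempty by hypothesis) and $N=C\setminus M$. Since $C$ is a clique, the stable sets of $G_I$ are exactly $\emptyset$, the singletons $\{i\}$ for $i\in I$, and the pairs $\{i,k+1\}$ for $i\in M$. Consequently $X_I\in\mathrm{STAB}^2(G_I)$ iff there exist nonnegative scalars $\lambda_\emptyset,\lambda_1,\ldots,\lambda_{k+1},(\mu_i)_{i\in M}$ summing to $1$ with
\begin{align*}
X_I=\sum_{i=1}^{k+1}\lambda_i\, e_ie_i^T+\sum_{i\in M}\mu_i\,(e_i+e_{k+1})(e_i+e_{k+1})^T.
\end{align*}

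For the $(\Rightarrow)$ direction I would take such a decomposition and read entries off the right-hand side. One obtains $\mu_i=X_{i,k+1}$ for $i\in M$, whereas for $i\in N$ no stable set contains both $i$ and $k+1$, so $X_{i,k+1}=0$; this is consistent with the form of $X_I$ displayed in the lemma, since it arises in the context where $X\in Q(t)$ with $S(t)=0$ forces $X_{i,k+1}=0$ on every edge. Matching the diagonal gives $\lambda_i=x_i-X_{i,k+1}$ for $i\in M$, $\lambda_i=x_i$ for $i\in N$, and $\lambda_{k+1}=x_{k+1}-\sum_{i\in M}X_{i,k+1}=x_{k+1}-\sum_{i=1}^{k}X_{i,k+1}$. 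Normalization then determines $\lambda_\emptyset=1+\sum_{i=1}^{k}X_{i,k+1}-\sum_{i=1}^{k+1}x_i$. The four groups of nonnegativity requirements translate term for term into $X_{i,k+1}\geq 0$ (already assumed), and exactly (\ref{R2}), (\ref{R3}), (\ref{R4}).

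For the $(\Leftarrow)$ direction I would adopt these very formulas as the \emph{definitions} of the coefficients. Their nonnegativity is then given precisely by the hypotheses (\ref{R2})--(\ref{R4}) together with $X_I\geq 0$, and $\lambda_\emptyset$ is chosen so that all coefficients sum to $1$. A direct check confirms that the resulting convex combination reproduces $X_I$ entry by entry: the diagonal, the entries $X_{i,k+1}$, and the forced zeros in the clique block (which appear because no stable set on the right-hand side contains two elements of $C$). The argument is essentially bookkeeping; the only mild subtlety is to justify $X_{i,k+1}=0$ for $i\in N$ from the ambient $Q(t)$-feasibility, after which the lemma reduces to matching a convex combination to the free entries of $X_I$.
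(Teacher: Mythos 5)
Your argument is essentially the paper's proof: enumerate the stable sets of $G_I$ (the empty set, the singletons, and the pairs $\{i,k+1\}$), note that the coefficients of the corresponding rank-one decomposition of $X_I$ are uniquely determined by equating entries, and translate their nonnegativity into (\ref{R2})--(\ref{R4}), with $\alpha$ (your $\lambda_\emptyset$) fixed by normalization. The one divergence is your set $N$ of neighbours of $k+1$ in $C$: the paper's intended reading (confirmed by its count of $2k+1$ coefficients and by the separate treatment of adjacencies in Lemma~\ref{C1}) is that $k+1$ has \emph{no} neighbour in $C$, so $N=\emptyset$ and your appeal to ambient $Q(t)$-feasibility to force $X_{i,k+1}=0$ for $i\in N$ --- which is not among the lemma's hypotheses and would otherwise be a genuine gap in the ``if'' direction --- is simply not needed.
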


\begin{proof}
	$X_I \in \mathrm{STAB}^2(G_I)$ if and only if there exist $\alpha \geq 0$, $\beta_i \geq 0$, $\gamma_i \geq 0$, $\delta \geq 0$ for $i \in \{1, \dots, k\}$, such that 
	\begin{align}
		\alpha + \sum_{i = 1}^k \beta_i + \sum_{i = 1}^k \gamma_i + \delta = 1, \label{e2}
	\end{align}
	and such that
	\begin{align}
		X_I = \alpha 0_{k+1} + \sum_{i = 1}^k \beta_i E_i + \sum_{i = 1}^k \gamma_i E_{i,k+1} + \delta E_{k+1} \label{X2}.
	\end{align}
There are $2k + 1$ unknowns in the matrix $X_I$ on the one side, and $2k + 1$ coefficients $\beta_i$, $\gamma_i$ and $\delta$ on the other side. Therefore, by equating the coefficients in~(\ref{X2}) we get
	\begin{alignat*}{3}
		\gamma_i &= X_{i, k+1} && \quad 1 \leq i \leq k\\
		\beta_i &= x_i - X_{i, k+1}  && \quad 1 \leq i \leq k\\
		\delta &=  x_{k+1} - \sum_{i = 1}^k X_{i,k+1}&&. 
	\end{alignat*}
Since all coefficients should be nonnegative, we get exactly the constraints (\ref{R2}) and (\ref{R3}). Furthermore, from~(\ref{e2}) it follows that
	\begin{alignat*}{4}
		\alpha = 1 - \sum_{i = 1}^{k + 1} x_i + \sum_{i = 1}^{k} X_{i, k+1},
	\end{alignat*}
	and since the coefficient $\alpha$ should also be nonnegative, we get	the constraint (\ref{R4}).
\end{proof}

\begin{bem}
	Let $I$, $G_I$ and $X_I$ be as in Lemma~\ref{L4}. Then conditions (\ref{R2}) - (\ref{R4}) are necessary and sufficient for $X_I \in \mathrm{STAB}^2(G_I)$.
\end{bem}

The statement of Lemma~\ref{L4} can be extended to subgraphs whose vertices can be partitioned into two disjoint cliques. For the start, we consider the case when there are no edges between two disjoint cliques. 

\begin{lem}\label{L5}
	Let $I = \{1, \dots, k, k +1, \dots, k+\ell\}$. Let vertices $\{1, \dots, k\}$ form a clique $C_1$, and let vertices $\{k+1, \dots, k+\ell\}$ form a clique $C_2$, such that $C_1 \cap C_2 = \emptyset$ and such that there are no edges between vertices in $C_1$ and $C_2$. Let
	\begin{align*}
		X_I = \begin{pmatrix} x_1 & \dots & 0 & X_{1,k+1} & \dots &X_{1,k+\ell}\\ \vdots & \ddots & \vdots & \vdots & \ddots & \vdots \\ 0 & \dots & x_k & X_{k,k+1} & \dots &X_{k,k+\ell} \\  X_{1,k+1} & \dots & X_{k,k+1} & x_{k+1} & \dots & 0 \\ \vdots & \ddots & \vdots & \vdots & \ddots & \vdots & \\ X_{1,k+\ell} & \dots & X_{k,k+\ell} & 0 & \dots & x_{k+\ell} 
		\end{pmatrix} \succeq 0,
	\end{align*}
	and $X_I \geq 0$. Then $X_I \in \mathrm{STAB}^2(G_I)$ if and only if
	\begin{subequations}
		\begin{alignat}{2}
			\sum_{j = 1}^{\ell} X_{i, k + j} &\leq x_{i} &&\quad 1 \leq i \leq k \label{R6}\\
			\sum_{i = 1}^{k} X_{i, k + j} &\leq x_{k+j} &&\quad 1 \leq j \leq \ell  \label{R7}\\
			\sum_{i = 1}^{k + \ell} x_i &\leq 1 + \sum_{i = 1}^{k} \sum_{j = 1}^{\ell} X_{i, k + j} &&\label{R8}
		\end{alignat}
	\end{subequations}
\end{lem}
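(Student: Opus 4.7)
The plan is to mirror the proof of Lemma~\ref{L4}, now with two disjoint cliques joined by no edges. First I would enumerate $S(G_I)$: since $C_1$ and $C_2$ are cliques, every stable set contains at most one vertex from each, and since no edges cross between $C_1$ and $C_2$, every pair $\{i, k+j\}$ with $i \in C_1$, $k+j \in C_2$ is stable. Thus $S(G_I)$ consists of the empty set, the $k + \ell$ singletons, and the $k\ell$ cross-pairs, and nothing larger.

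Next I would write the candidate decomposition
\begin{align*}
X_I = \alpha\, 0_{k+\ell} + \sum_{i=1}^{k} \beta_i E_i + \sum_{j=1}^{\ell} \delta_j E_{k+j} + \sum_{i=1}^{k}\sum_{j=1}^{\ell} \gamma_{ij} E_{i,k+j},
\end{align*}
with non-negative coefficients summing to $1$. The counting matches Lemma~\ref{L4}: $X_I$ has $k + \ell + k\ell$ free entries (the $k+\ell$ diagonals and $k\ell$ cross-block entries; the off-diagonals within each clique are already fixed at $0$), while the decomposition has $k + \ell + k\ell + 1$ coefficients tied by one normalization. Equating coefficients entry by entry forces $\gamma_{ij} = X_{i,k+j}$, then $\beta_i = x_i - \sum_{j=1}^{\ell} X_{i,k+j}$ and $\delta_j = x_{k+j} - \sum_{i=1}^{k} X_{i,k+j}$ from the diagonals, and finally $\alpha = 1 - \sum_{i=1}^{k+\ell} x_i + \sum_{i,j} X_{i,k+j}$ from the normalization. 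Non-negativity of $\gamma_{ij}$ is immediate from $X_I \geq 0$, while non-negativity of $\beta_i$, $\delta_j$, and $\alpha$ is exactly the system (\ref{R6})--(\ref{R8}), giving both directions of the equivalence.

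The argument is essentially bookkeeping, with no substantive new obstacle beyond Lemma~\ref{L4}; the only point to verify is the linear independence of the matrices $\{0_{k+\ell}, E_i, E_{k+j}, E_{i,k+j}\}$ in the subspace of symmetric matrices that vanish on clique-internal off-diagonals. This is immediate since $E_{i,k+j}$ is the unique basis element with a nonzero entry at position $(i,k+j)$, and $E_i$, $E_{k+j}$ are then the unique contributors to each particular diagonal entry, so the coefficient-matching step is genuinely forced.
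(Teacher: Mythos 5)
Your proposal is correct and follows essentially the same route as the paper's proof: enumerate the stable sets of $G_I$ (empty set, singletons, cross-pairs), write $X_I$ as a convex combination of the corresponding rank-one matrices, equate coefficients, and read off (\ref{R6})--(\ref{R8}) as the nonnegativity of the resulting coefficients. The only addition is your explicit linear-independence remark, which the paper leaves implicit in its count of unknowns versus coefficients.
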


\begin{proof}
	We proceed as in the proof of Lemma~\ref{L4}. $X_I \in \mathrm{STAB}^2(G_I)$ if and only if there exist $\alpha \geq 0$, $\beta_i \geq 0$, $\beta_{k+j} \geq 0$, $\gamma_{i, k +j} \geq 0$ for $i \in \{1, \dots, k\}$, $j \in \{1, \dots, \ell\}$ such that
	\begin{align}
		\alpha + \sum_{i = 1}^{k + \ell} \beta_i + \sum_{i = 1}^k \sum_{j = 1}^{\ell} \gamma_{i, k + j} = 1 \label{alpha},
	\end{align}
	and such that
	\begin{align}
		X_I = \alpha 0_{k+\ell} + \sum_{i = 1}^{k} \beta_i E_i + \sum_{j = 1}^{\ell} \beta_{k + j}E_{k + j} + \sum_{i = 1}^{k} \sum_{j = 1}^{\ell} \gamma_{i, k + j} E_{i, k+j} \label{13}.
	\end{align}
	
	In this case we have altogether $k\ell + k + \ell$ unknowns in the matrix $X_I$ and the same number of coefficients $\beta_i$ and $\gamma_{i, k + j}$. Hence, by equating the coefficients in~(\ref{13}) we get
	\begin{alignat*}{4}
		& \gamma_{i, k +j} &&= X_{i, k+j} && \quad 1 \leq i \leq k, \quad 1 \leq j \leq \ell   \\
		& \beta_i &&= x_i - \sum_{j = 1}^{\ell} X_{i, k+j} && \quad 1 \leq i \leq k  \\
		& \beta_{k + j} &&= x_{k+j} - \sum_{i = 1}^{k} X_{i, k+j} && \quad 1 \leq j \leq \ell.
	\end{alignat*}
	Moreover, the sum of all coefficients should be one, so we get from (\ref{alpha}) that
	\begin{alignat*}{4}
		&\alpha &&= 1 - \sum_{i = 1}^{k + \ell} x_i + \sum_{i = 1}^{k} \sum_{j = 1}^{\ell} X_{i, k+j},
	\end{alignat*}
	and since all coefficients should be nonnegative, we get the constraints (\ref{R6}) - (\ref{R8}).
\end{proof}

\begin{bem}
	Let $I$, $G_I$ and $X_I$ be as in Lemma~\ref{L5}. Then conditions (\ref{R6}) - (\ref{R8}) are necessary and sufficient for $X_I \in \mathrm{STAB}^2(G_I)$.
\end{bem}

\begin{bem}
	Lemma~\ref{L4} is a special case of Lemma~\ref{L5}.
\end{bem}
	
Finally, we show that the statement of Lemma~\ref{L5} holds for all subgraphs whose vertices can be partitioned into two disjoint cliques.

\begin{lem}\label{C1}
Let $I = \{1, \dots, k, k +1, \dots, k+\ell\}$. Let vertices $\{1, \dots, k\}$ form a clique $C_1$, and let vertices $\{k+1, \dots, k+\ell\}$ form a clique $C_2$, such that $C_1 \cap C_2 = \emptyset$, and such that at least one vertex $i \in C_1$ and one vertex $j \in C_2$ are not adjacent. Then $X_I \in \mathrm{STAB}^2(G_I)$ if and only if constraints (\ref{R6})~-~(\ref{R8}) hold.
\end{lem}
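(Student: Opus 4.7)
The plan is to reduce Lemma~\ref{C1} to Lemma~\ref{L5} via an auxiliary graph. Let $G'_I$ denote the graph on vertex set $I$ obtained from $G_I$ by deleting every inter-clique edge; then $G'_I$ satisfies the hypotheses of Lemma~\ref{L5}. Any stable set of $G_I$ is also a stable set of $G'_I$, so $\mathrm{STAB}^2(G_I) \subseteq \mathrm{STAB}^2(G'_I)$, and any candidate $X_I \in \mathrm{STAB}^2(G_I)$ automatically satisfies $X_{i, k+j} = 0$ for every inter-clique edge $[i, k+j] \in E(G_I)$.

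The forward direction is then immediate: if $X_I \in \mathrm{STAB}^2(G_I)$, the inclusion above places $X_I$ in $\mathrm{STAB}^2(G'_I)$, and Lemma~\ref{L5} delivers (\ref{R6})--(\ref{R8}). For the converse, assume (\ref{R6})--(\ref{R8}) hold together with $X_{i, k+j} = 0$ on every inter-clique edge of $G_I$ (an inherent requirement for $X_I$ to lie in $\mathrm{STAB}^2(G_I)$). The explicit decomposition from the proof of Lemma~\ref{L5} applied to $G'_I$ produces nonnegative coefficients $\alpha, \beta_i, \gamma_{i, k+j}$ summing to $1$ with $\gamma_{i, k+j} = X_{i, k+j}$, so that
$$X_I = \alpha \cdot 0_{k+\ell} + \sum_{i=1}^{k+\ell} \beta_i E_i + \sum_{i=1}^{k}\sum_{j=1}^{\ell} \gamma_{i, k+j} E_{i, k+j}.$$
For every inter-clique edge $[i, k+j] \in E(G_I)$ the coefficient $\gamma_{i, k+j}$ vanishes, so the corresponding terms drop out; the surviving $E_{i, k+j}$ correspond to pairs $\{i, k+j\}$ that are stable in $G_I$. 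This exhibits $X_I$ as a convex combination of matrices $ss^T$ with $s \in S(G_I)$, proving $X_I \in \mathrm{STAB}^2(G_I)$.

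The only delicate point in this strategy is noticing that Lemma~\ref{L5}'s decomposition may in principle invoke rank-one terms $E_{i, k+j}$ supported on pairs that are edges in $G_I$, but those terms carry zero coefficient precisely because $X_I$ vanishes on the corresponding entries. Once this observation is in place, no further calculation is needed and the result follows directly from Lemma~\ref{L5}.
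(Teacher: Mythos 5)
Your proposal is correct and follows essentially the same route as the paper: the authors likewise reuse the decomposition from Lemma~\ref{L5} and observe that for each inter-clique edge $[i,k+j]$ the entry $X_{i,k+j}$, and hence the coefficient $\gamma_{i,k+j}$, vanishes, so the decomposition only involves stable sets of $G_I$. Your packaging via the auxiliary graph $G'_I$ and your explicit note that $X_{i,k+j}=0$ on inter-clique edges is a prerequisite for membership in $\mathrm{STAB}^2(G_I)$ make the same argument slightly more carefully than the paper's three-line version, but the underlying idea is identical.
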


\begin{proof}
	We assume without loss of generality that only vertices $1$ and $k+1$ are adjacent. Then $X_{1, k+1} = 0$. From the proof of Lemma \ref{L5} we know that $\gamma_{1, k + 1} = X_{1, k + 1} = 0$. Thus, the coefficient $\gamma_{1, k + 1}$ is nonnegative, so the statement holds.
\end{proof}

We sum up cutting planes from Lemmas~\ref{L4}~and~\ref{L5}. First, we note that the constraint (\ref{R2}) is a well-known cutting plane for Boolean quadric polytope, see \cite{For:60}. Second, we note that for two cliques $C_1$ and $C_2$ with $C_1 \cap C_2 = \emptyset$ we have that
\begin{align*}
	X_{C_1 \cup C_2} \in \mathrm{STAB^2}(G_{C_1 \cup C_2}) \quad \Leftrightarrow \quad X_I \in \mathrm{STAB^2}(G_I) \quad \forall I \subseteq C_1 \cup C_2.
\end{align*}
Therefore, if we have a clique $C_1$ and a vertex $j \in C_2$ (or vice versa), adding constraints (\ref{R6}), (\ref{R7}) and (\ref{R8}) will imply constraints (\ref{R3}) and (\ref{R4}). Thus, we bring constraints from Lemmas~\ref{L4} and~\ref{L5} together as follows: 
\begin{align}
	\sum_{i \in C_1} X_{i, j} &\leq x_{j} &&\qquad \forall C_1 \subseteq V(G), \forall j \in V(G), j \notin C_1 \label{UCC1} \\
	\sum_{i \in C_1 \cup C_2} x_i &\leq 1 + \sum_{\substack{i \in C_1 \\ j \in C_2}} X_{i, j} &&\qquad \forall C_1 \subseteq V(G),  C_2 \subseteq V(G), C_1 \cap C_2 = \emptyset \label{UCC2}.
\end{align}
We assume now that we have strengthened the relaxation $Q(t)$ with presented cutting planes (\ref{UCC1}) and (\ref{UCC2}). Then, obviously, we will add constraints for subgraphs of various sizes. However, we note that a common property of these subgraphs is that they all have the stability number $2$. The question is whether we were able to include all subgraphs with stability number $2$. This is unfortunately not the case, since there exist subgraphs $G_I$ such that $\alpha(G_I) = 2$, but $I$ does not have a structure as stated in Lemmas~\ref{L4}, ~\ref{L5} and ~\ref{C1}. One example is a subgraph which is induced by a cycle of length $5$. Nevertheless, we show that after imposing constraints (\ref{UCC1}) and (\ref{UCC2}) on all subgraphs which can be partitioned into two cliques, there is only one inequality which should be added to the relaxation in order to have $X_I \in \mathrm{STAB}^2(G_I)$, where $I \subseteq V$, and $G_I$ is a cycle of length $5$.

\begin{lem}\label{L6}
	Let $\vert I \vert = 5$, such that the
        induced subgraph $G_I$ is a cycle of length $5$. Let $X_I
        \succeq 0$ and $X_I \geq 0$. If constraints (\ref{UCC1}) and
        (\ref{UCC2}) are satisfied for all cliques $C_1 \subseteq I$
        and $C_2 \subseteq I$, such that $C_1 \cap C_2 = \emptyset$,
        then $X_I \in \mathrm{STAB}^2(G_I)$ if and only if 
	\begin{align}
		\sum_{i = 1}^{5} x_i &\leq 1 + \sum_{i,j \in E(G_I)} X_{i, j}. &&\label{P1} 
	\end{align}  
\end{lem}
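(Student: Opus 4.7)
The plan is to follow the coefficient-matching strategy used in the proofs of Lemma~\ref{L4} and Lemma~\ref{L5}. First enumerate the stable sets of $G_I = C_5$: the empty set, the five singletons $\{i\}$ for $i \in \{1,\dots,5\}$, and the five non-edges of $G_I$ (pairs of non-adjacent vertices), giving eleven stable sets in total. Accordingly, any candidate convex decomposition of $X_I$ has the form
\begin{equation*}
X_I \;=\; \alpha\, 0_{|I|} + \sum_{i=1}^{5} \beta_i E_i + \sum_{\{i,j\} \notin E(G_I)} \gamma_{ij} E_{i,j},
\end{equation*}
with nonnegative coefficients summing to one.

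Equating entries of $X_I$ with the right-hand side produces five diagonal equations, five off-diagonal equations at non-edge positions, and the normalization equation---a square linear system in the eleven unknowns. Solving uniquely yields
\begin{equation*}
\gamma_{ij} = X_{ij}, \qquad \beta_i = x_i - \sum_{j\,:\,\{i,j\} \notin E(G_I)} X_{ij}, \qquad \alpha = 1 - \sum_{i=1}^{5} x_i + \sum_{\{i,j\} \notin E(G_I)} X_{ij}.
\end{equation*}
(As in the proofs of Lemmas~\ref{L4} and~\ref{L5}, I take the entries $X_{ij}$ for $\{i,j\} \in E(G_I)$ to be zero, which is forced by membership in $\mathrm{STAB}^2(G_I)$ and on the sufficiency side is the standing assumption underlying the decomposition.) Consequently $X_I \in \mathrm{STAB}^2(G_I)$ if and only if all three families of quantities are nonnegative, and $\gamma_{ij} \geq 0$ is immediate from $X_I \geq 0$.

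The decisive structural fact about $C_5$ is that each vertex $i$ has exactly two non-neighbors, and those two non-neighbors are themselves adjacent---they form a clique $C_1$ of size~$2$. Applying (\ref{UCC1}) with this $C_1$ and $j = i$ gives $\sum_{k \in C_1} X_{k,i} \leq x_i$, which is precisely $\beta_i \geq 0$. The remaining condition $\alpha \geq 0$ then rearranges exactly to the new inequality~(\ref{P1}), establishing sufficiency. For necessity, any $X_I \in \mathrm{STAB}^2(G_I)$ admits such a decomposition whose coefficients are forced uniquely by the entries of $X_I$, so $\alpha \geq 0$---equivalently~(\ref{P1})---must hold. The main subtlety is recognizing that the ``non-neighbor-is-clique'' property of $C_5$ lets (\ref{UCC1}) alone handle every $\beta_i \geq 0$ constraint, leaving (\ref{P1}) as the sole additional inequality needed beyond (\ref{UCC1}) and (\ref{UCC2}).
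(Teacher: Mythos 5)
Your proof is correct and follows essentially the same route as the paper's: the same coefficient-matching decomposition over the eleven stable sets of $C_5$, with the key observation that the two non-neighbors of each vertex form a clique (an edge), so that (\ref{UCC1}) already enforces $\beta_i \geq 0$ and only the $\alpha \geq 0$ condition, i.e.\ (\ref{P1}), remains to be imposed. Your reading of the sum in (\ref{P1}) as running over the non-adjacent pairs (equivalently, the edges of $\overline{G_I}$) is the one consistent with the derived value of $\alpha$, and your explicit uniqueness argument for the coefficients cleanly covers the necessity direction.
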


\begin{proof}
	Without loss of generality, let $I = \{1, \dots, 5\}$ and let $\vert E(G_I) \vert = 5$, $[1,2], [2,3], [3,4], [4,5], [1,5] \in E(G_I)$. Then
	\begin{align*}
		X_I = \begin{pmatrix} x_1 & 0 & X_{1, 3} & X_{1,4} & 0 \\ 0 & x_2 & 0 & X_{2, 4} & X_{2, 5} \\ X_{1, 3} & 0 & x_3 & 0 & X_{3,5} \\  X_{1,4} & X_{2, 4} & 0 & x_4 & 0 \\ 0 & X_{2, 5} & X_{3, 5} & 0 & x_5 \\ 
		\end{pmatrix}.
	\end{align*}
	Analogously to previous proofs, we have that $X_I \in \mathrm{STAB}^2(G_I)$ if 
	\begin{align}
		X_I = \alpha 0_{5} + \sum_{i = 1}^5 \beta_i E_i + \sum_{i,j \in E(G_I)} \gamma_{i, j} E_{i,j},
	\end{align}   
	where the coefficients $\alpha$, $\beta_i$ for all $i \in I$ as well as $\gamma_{i,j}$ for all for $i,j \in E(G_I)$ are nonnegative, and 
	\begin{align}
		\alpha + \sum_{i = 1}^k \beta_i + \sum_{i = 1}^k \gamma_i = 1. \label{e3}
	\end{align}
	Explicitly written, this means that $X_I \in \mathrm{STAB}^2(G_I)$ iff the constraint~(\ref{P1}) as well as constraints
	\begin{subequations}
		\begin{align}
			x_1 &\geq X_{1, 3} + X_{1, 4} \label{p1}\\
			x_2 &\geq X_{2, 4} + X_{2, 5} \label{p2}\\
			x_3 &\geq X_{1, 3} + X_{3, 5} \label{p3}\\
			x_4 &\geq X_{1, 4} + X_{2, 4} \label{p4}\\
			x_5 &\geq X_{2, 5} + X_{3, 5} \label{p5},
		\end{align}
	\end{subequations}
	are satisfied.
	Since the maximal cliques in this graph are edges, we have according to our assumption that for all edges $i,j \in E(G_I)$ and all vertices $k \in I$, $k \neq i$, $k \neq j$, it holds that
	\begin{align*}
		x_k \geq X_{k, i} + X_{k, j},
	\end{align*}
	so the constraints (\ref{p1}) - (\ref{p5}) hold. Thus, adding the constraint~(\ref{P1}) results in $X_I \in \mathrm{STAB}^2(G_I)$.
\end{proof}

Now we consider strengthening of the semidefinite relaxation $P(t)$. Again, we assume that we have found the smallest $k \in \mathbb{N}$ such that $C(k) = 0$. Our approach is the same as for $Q(t)$---we exploit the structure of the convex hull of the set of all coloring matrices for subgraphs with certain properties.

For a graph $G$ with $V(G) =\{1, \dots, n\}$, let $S = (s_1, \dots , s_k)$ be a matrix where each column $s_i$ represents a stable set vector, and where the corresponding stable sets partition $V$ into $k$ sets. The $n \times n$ matrix $X = SS^T$ is called coloring matrix. The convex hull of the set of all coloring matrices of $G$ is denoted by
\begin{align*}
	\text{COL}(G) = \text{conv} \{X \colon X \text{ is a coloring matrix of } G\}.
\end{align*}

Since the projection of the coloring problem onto a subgraph shares the same structure as the original problem, we employ again the approach from~\cite{AARW:15}, and strengthen the relaxation $C(k)$ by requiring for $I \subseteq V(G)$ that
$ X_I \in \mathrm{COL}(G_I)$ to the formulation. Analogously to the strengthening of $Q(t)$, we focus on subgraphs with certain structures. More precisely, we show that for a subgraphs induced by a clique $C$ it follows automatically from the relaxation that $X_C \in \mathrm{COL}(G_C)$. Then we consider subgraphs which are induced by a clique $C$ and a vertex $v$ which is not adjacent to all vertices in $C$, and derive necessary and sufficient conditions for $X_{C \cup v} \in \mathrm{COL}(G_{C \cup v})$.

For showing the statement we define matrices $M_{i,j}$ as follows: let $I \subseteq V(G)$ with $I = \{1, \dots, k\}$, and let $i, j \in I$, $i \neq j$; then the entries $i,j$ and $j,i$ in the matrix $M_{i,j}$ are equal to $1$, and the rest of the entries are equal to $0$. 

Let $C = \{1, \dots, k\}$ be a clique. Then $(X_C)_{i, j} = 0$ for all $[i, j] \in E(G_C)$. Hence, the only coloring matrix of the respective subgraph $G_C$ is the identity matrix $I_k$. Thus, $X_C \in \mathrm{COL}(G_C)$.    

Now we consider subgraphs which are induced by a clique $C = \{1, \dots, k\}$ and a vertex $k + 1$ which is not adjacent to all vertices in $C$. In this case we have (up to) altogether $k+1$ different partitions of vertices into stable sets and thus (up to) $k+1$ different coloring matrices. We have
\begin{align*}
	S_i &= (e_1, \dots, e_i + e_{k+1}, \dots, e_k), \quad 1 \leq i \leq k\\
	S_{k+1} &= I_{k+1}.
\end{align*}

Hence, for $i \in \{1, \dots, k\}$ we have that $S_i S^T_i = I_{k+1} + M_{i,k+1}$, and for $i = k+1$ we have that $S_{k+1} S^T _{k+1}= I_{k+1}$. With this observation we are ready to show the following statement.

\begin{lem}\label{LCol}
	Let $I = C \cup \{k+1\}$ where $C = \{1, \dots, k\}$ is a clique and vertex $k+1$ is not adjacent to all vertices in $C$. Let
	\begin{align*}
		X_I = \begin{pmatrix} 1 & \dots & 0 & X_{1,k+1} \\ \vdots & \ddots & \vdots & \vdots \\ 0 & \dots & 1 & X_{k,k+1}\\ X_{1,k+1} & \dots & X_{k,k+1} & 1
		\end{pmatrix} \succeq 0,
	\end{align*}
	and $X_I \geq 0$. Then $X_I \in \mathrm{COL}(G_I)$ if and only if
	\begin{alignat}{2}
		\sum_{i = 1}^{k} X_{i,k+1} &\leq 1 &&\label{Col}.
	\end{alignat}
\end{lem}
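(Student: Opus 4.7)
The plan is to mimic the coefficient-matching strategy of Lemmas~\ref{L4} and~\ref{L5}, now for $\mathrm{COL}$ in place of $\mathrm{STAB}^2$. Because $C$ is a clique, every coloring of $G_I$ must place its vertices in $k$ distinct color classes; the only remaining freedom concerns vertex $k+1$, which either gets its own class or joins the class of some non-neighbor $i \in C$. Writing $N \coloneqq \{i \in C : [i,k+1] \notin E(G_I)\}$, the extreme points of $\mathrm{COL}(G_I)$ are therefore exactly the $|N|+1$ matrices $I_{k+1}$ and $I_{k+1}+M_{i,k+1}$ for $i \in N$, as already observed in the paragraph preceding the lemma.

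For the necessity direction ($\Rightarrow$), I would assume $X_I \in \mathrm{COL}(G_I)$ and expand
\begin{align*}
X_I \;=\; \alpha\, I_{k+1} \;+\; \sum_{i \in N}\gamma_i\,(I_{k+1}+M_{i,k+1}),
\qquad \alpha,\gamma_i \geq 0, \quad \alpha + \sum_{i \in N}\gamma_i = 1.
\end{align*}
Reading off the $(i,k+1)$ entries gives $X_{i,k+1}=\gamma_i$ for $i \in N$ and $X_{i,k+1}=0$ for $i \in C\setminus N$, so \eqref{Col} follows from $\sum_{i=1}^{k}X_{i,k+1}=\sum_{i \in N}\gamma_i \leq 1$.

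For the sufficiency direction ($\Leftarrow$), I would set $\gamma_i \coloneqq X_{i,k+1}$ for $i \in N$ and $\alpha \coloneqq 1-\sum_{i \in N}\gamma_i$. Nonnegativity of the $\gamma_i$ comes from $X_I \geq 0$ and of $\alpha$ from \eqref{Col}. A direct entrywise check then confirms that $\alpha I_{k+1} + \sum_{i \in N}\gamma_i(I_{k+1}+M_{i,k+1})$ reproduces $X_I$: the diagonal equals $e$ because $\alpha+\sum\gamma_i=1$, the off-diagonal entries inside $C$ vanish because no $M_{i,k+1}$ contributes there, and each $(i,k+1)$ entry equals $\gamma_i=X_{i,k+1}$ by construction.

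The only subtlety worth flagging is the treatment of edges $[i,k+1] \in E(G_I)$: membership in $\mathrm{COL}(G_I)$ forces $X_{i,k+1}=0$ for every such $i$, which is automatic in the necessity direction from the enumeration of extreme matrices but is not enforced by \eqref{Col} alone in the sufficiency direction. This vanishing is implicit in the relaxation context (where it is imposed separately alongside the cutting plane). Beyond this bookkeeping point, the dimension count matches the one in Lemma~\ref{L4}---the $|N|$ free entries $X_{i,k+1}$ together with the diagonal identity match the $|N|+1$ convex coefficients subject to one normalization---so no deeper obstacle is expected, and the positive semidefiniteness hypothesis is not even needed for the argument.
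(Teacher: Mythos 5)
Your proof is correct and follows essentially the same coefficient-matching argument as the paper, which likewise expands $X_I$ over the coloring matrices $I_{k+1}$ and $I_{k+1}+M_{i,k+1}$, reads off $\beta_{i,k+1}=X_{i,k+1}$ (nonnegative since $X_I\geq 0$), and obtains the inequality from $\alpha = 1-\sum_{i=1}^{k}X_{i,k+1}\geq 0$. The only divergence is your explicit bookkeeping with the set $N$ of non-neighbors of vertex $k+1$; the paper reads the hypothesis as $k+1$ being adjacent to no vertex of $C$ (so $N=C$, matching the displayed matrix with all $k$ entries $X_{i,k+1}$ free), under which the subtlety you flag about forced zeros does not arise.
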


\begin{proof}
	$X_I \in \mathrm{COL}(G_I)$ if and only if there exist $\alpha \geq 0$ and  $\beta_{i,k+1} \geq 0$ for $i \in \{1, \dots, k\}$, such that
	\begin{align}
		X_I = \alpha I_{k+1} + \sum_{i = 1}^{k}\beta_{i, k+1} (I_{k+1} + M _{i, k + 1}), \label{Colstr}
	\end{align}
	and such that
	\begin{align}
		\alpha + \sum_{i = 1}^{k}\beta_{i, k+1} = 1. \label{Colsum}
	\end{align}
	
	There are $k$ unknowns in the matrix $X_I$ as well as $k$ coefficients $\beta_{i, k + 1}$. Hence, by equating the coefficients in~(\ref{Colstr}) it follows that
	\begin{align*}
		X_{i, k+1} = \beta_{i, k + 1}, \quad 1 \leq i \leq k,
	\end{align*}
	so we can write the equation~(\ref{Colsum}) as
		\begin{align*}
			\alpha + \sum_{i = 1}^{k}X_{i, k+1} = 1.
		\end{align*}
	Since the coefficient $\alpha$ should be nonnegative, we get the constraint~(\ref{Col}).
\end{proof}

\begin{bem}
	Let $I \subseteq V(G)$ be as in Lemma~\ref{LCol}. Then the condition~(\ref{Col}) is necessary and sufficient for $X_I \in \mathrm{COL}(G_I)$.
\end{bem}

To sum up, we have found a new cutting plane for subgraphs induced by a clique $C$ and a vertex $j$ which is not adjacent to all vertices in the clique. Hence, we strengthen our relaxation by adding the constraint
\begin{align}
	\sum_{i \in C} X_{i, j} &\leq 1 &&\qquad \forall C \subseteq V(G), \forall j \in V(G), j \notin C \label{ECC}
\end{align}
to the formulation.

\section{Computational results}

We present the computational implementation of the problem $Q(t)$.
In order to perform computations efficiently, we switch to the problem $Q'(t)$, as already mentioned in Remark~\ref{Remark_Lovasz_S}.
We recall that for $t \leq \vartheta(G)$ the optimal value $S'(t) = 0$, while for $t > \vartheta(G)$ we have that $S'(t) > 0$.
Our goal is to find an upper bound on the stability number, i.e.\ to find the largest $k \in \mathbb{N}$ with $S(k) = 0$, since then $\alpha(G) \leq k$.
Here we note that the time needed for computation of $S'(k)$ strongly depends on the value of $k$.
More precisely, computations for $k \leq \lfloor \vartheta(G) \rfloor$ are faster than computations for $k \geq \lfloor \vartheta(G) \rfloor + 1$.
This observation is visualized in Figure~\ref{fig:example}.
Here we consider an instance from the second DIMACS implementation challenge~\cite{DIMACS}, \textsf{brock200\_1}, which has $200$ vertices and $5066$ edges, and for which we compute $\vartheta(G) = 27.4566$ within $29$ seconds\footnote{All computational experiments were performed on Intel(R) Core(TM) i7-10510U CPU @ 1.80GHz 2.30GHz with 16GB RAM under Windows 10 operating system. We perform computations in MATLAB and use MOSEK as solver. The program code associated with this paper is available as ancillary files at \url{https://arxiv.org/abs/2211.13147}.}.

\begin{figure}[h]  
	\centering 
	\begin{subfigure}[b]{0.3\linewidth}
		\begin{tikzpicture}[scale=0.45]
			\begin{axis}[
				xlabel={$k$},
				ylabel={$S'(k)$},
				xmin=20, xmax=35,
				ymin=0, ymax=25,
				xtick={20,20,21,22,23,24,25,26,27,28,29,30,31,32,33,34,35},
				ytick={0,5,10,15,20,25},
				legend pos=north west,
				ymajorgrids=true,
				grid style=dashed,
				]
				
				\addplot[
				color=blue,
				mark=square,
				]
				coordinates {
					(20,0)(21,0)(22,0)(23,0)(24,0)(25,0)(26,0)(27,0)(28,0.86)(29,2.91)(30,5.36)(31,8.13)(32,11.21)(33,14.58)(34,18.25)(35,22.19)
				};

			\end{axis}
		\end{tikzpicture}
		\caption{Values of $S'(k)$ for different values of $k$} \label{fig:example_a}  
	\end{subfigure}
	\begin{subfigure}[b]{0.3\linewidth}
		\begin{tikzpicture}[scale=0.45]
			\begin{axis}[
				xlabel={$k$},
				ylabel={CPU time},
				xmin=20, xmax=35,
				ymin=10, ymax=60,
				xtick={20,20,21,22,23,24,25,26,27,28,29,30,31,32,33,34,35},
				ytick={10,20,30,40,50,60},
				legend pos=north west,
				ymajorgrids=true,
				grid style=dashed,
				]
				
				\addplot[
				color=red,
				mark=square,
				]
				coordinates {
					(20,14.33)(21,15.29)(22,14.31)(23,15.38)(24,19.92)(25,17.03)(26,16.90)(27,16.94)(28,45.41)(29,44.91)(30,53.80)(31,47.20)(32,40.30)(33,39.61)(34,42.75)(35,39.92)};

			\end{axis}
		\end{tikzpicture}
		\caption{CPU times for the computation of $S'(k)$} \label{fig:example_b}  
	\end{subfigure}
	\begin{subfigure}[b]{0.3\linewidth}
		\begin{tikzpicture}[scale=0.45]
			\begin{axis}[
				xlabel={$k$},
				ylabel={CPU time},
				xmin=20, xmax=35,
				ymin=10, ymax=60,
				xtick={20,20,21,22,23,24,25,26,27,28,29,30,31,32,33,34,35},
				ytick={10,20,30,40,50,60},
				legend pos=north west,
				ymajorgrids=true,
				grid style=dashed,
				]
				
				\addplot[
				color=red,
				mark=square,
				]
				coordinates {
					(20,17.85)(21,17.86)(22,19.94)(23,16.89)(24,20.23)(25,18.19)(26,17.31)(27,22.69)(28,20.73)(29,17.34)(30,17.42)(31,17.75)(32,15.40)(33,15.08)(34,15.42)(35,15.37)};
			\end{axis}
		\end{tikzpicture}
		\caption{CPU times for the computation of $S^*(k)$} \label{fig:example_c}  
	\end{subfigure}
	
	\caption{Details regarding the computation of $S'(k)$ and $S^*(k)$ for graph \textsf{brock200\_1}} 
	\label{fig:example}
\end{figure}
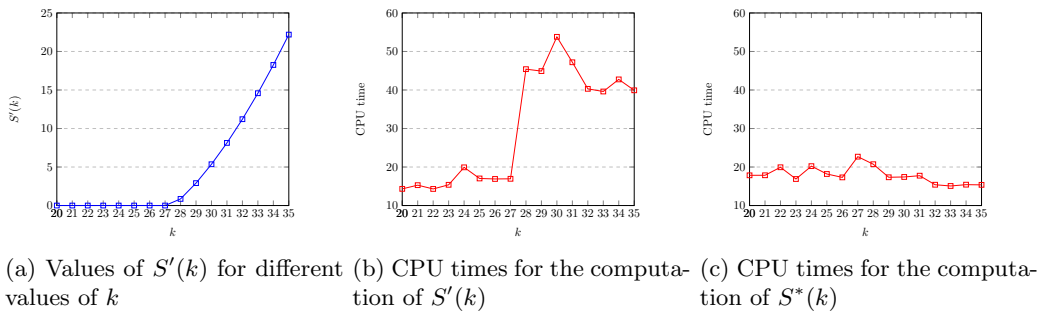  

From the Figure~\ref{fig:example_b} we see that computations of $S'(k)$ for $k \geq \lfloor \vartheta(G) \rfloor + 1$ take longer than the computation of $\vartheta(G)$. Nevertheless, there is a way to overcome this obstacle. Generally, if $S'(t) > 0$ we are not interested in the actual value of $S'(t)$. Therefore, we add the constraint $\langle A, X \rangle = 0$ into the formulation of $Q'(t)$. This newly defined problem $Q^*(t)$ will be infeasible in case when $S'(t) > 0$, which influences computational times as presented in Figure~\ref{fig:example_c}. We denote the optimal value of the problem $Q^*(t)$ by $S^*(t)$.

As the next step we investigate times needed for the computations performed on the critical interval which result with the information $\alpha(G) \leq k$.
We consider selected DIMACS instances and report results in Table~\ref{table:results_selected_DIMACS}. We state basic information about each instance---name of the graph, number of vertices and edges $n$ and $m$, stability number of the graph $\alpha(G)$, as well as information regarding $\vartheta(G)$, $k$, and CPU times needed for computations. 

\begin{table}[h!]
	\centering
	\setlength{\tabcolsep}{3pt}
	\begin{tabular}{ l r r r | r r | r r | r r } 
		\hline
		\multicolumn{4}{c}{Instance} & \multicolumn{2}{c}{\shortstack{Computation \\ of $\vartheta(G)$ }} & \multicolumn{2}{c}{\shortstack{Computation of\\ $S'(k)$ and $S'(k+1)$ }} &
		\multicolumn{2}{c}{\shortstack{Computation of\\ $S^*(k)$ and $S^*(k+1)$ }} \\
		\hline
		Graph & n & m & $\alpha(G)$ & $\vartheta(G)$ & time & $k$ & time  & $k$ & time \\ 
		\hline
		\textsf{keller4} & 171 & 5100 & 11 & 14.01 & 31 & 14 & 55 & 14 & 51\\
		\textsf{brock200\_1} & 200 & 5066  & 21 & 27.46 & 29 & 27 & 72 & 27 & 50 \\
		\textsf{brock200\_2} & 200 & 10024 & 12 & 14.23 & 159 & 14 & 295 & 14 & 195 \\ 
		\textsf{sanr200\_0.9} & 200 & 2037 & 42 & 49.27 & 5 & 49 & 11 & 49 & 13\\
		\textsf{p\_hat300\_3} & 300 & 11460 & 36 & 41.17 & 279 & 41 & 452 & 41 & 331\\
		\textsf{brock400\_1} & 400 & 20077 & 27 & 39.70 & 1598 & 39 & 2834 & 39 & 2213\\
		\hline
	\end{tabular}
	\caption{Computational times needed for the computation of $\vartheta(G)$ as well as $S'(k)$, $S'(k + 1)$, $S^*(k)$, and $S^*(k + 1)$ for $k = \lfloor \vartheta(G) \rfloor$ for selected DIMACS instances}
	\label{table:results_selected_DIMACS}
\end{table}

Finally, we examine the strengthening proposed in Section \ref{Strengthen_S(k)}.
We start with formulations $\vartheta(G)$ as well as $Q'(k)$ with $k = \lfloor \vartheta(G) \rfloor$ and proceed iteratively by adding constraints for violated inequalities only.
In each iteration we add all violations of nonnegativity constraints.
However, for constraints (\ref{UCC1}) and (\ref{UCC2}) we use a different approach.
More specifically, we consider only violations which are greater than a certain epsilon and which are greater or equal to $50\%$ of the value of the largest violation.
Altogether, iterations are done until there are less than $n$ violations.
We set $\epsilon = 0.0
1$ and perform computational experiments for selected instances from the second DIMACS implementation challenge. We have chosen instances for which the computation of all maximal cliques is done rather quickly. In order to enumerate all maximal cliques we use the Bron-Kerbosch algorithm as given in~\cite{Wildman}. Results for selected DIMACS instances are summarized in Figure~\ref{table:results_strengthening_stability_number}. We report information about considered instance---name of the graph, number of vertices and edges $n$ and $m$, stability number $\alpha(G)$, value of the $\vartheta(G)$, as well as number of maximal cliques denoted as MC. For the part concerning the strengthening of the $\vartheta(G)$ we report improved bound, number of iterations, number of added inequalities as well as CPU time, denoted as bound, iter, added and time. For the part concerning the strengthening of the problem $Q'(k)$ we report the same data, but also include information about the value of $k$ used for the first iteration.

\begin{table}[h!]
	\centering
	\setlength{\tabcolsep}{3pt}
	\begin{tabular}{ l r r r r r | r r r r | r r r r r} 
		\hline
		\multicolumn{6}{c}{Instance} & \multicolumn{4}{c}{Strengthening of $\vartheta(G)$} & \multicolumn{5}{c}{Strengthening of $Q'(k)$} \\
		\hline
		Graph & n & m & $\alpha(G)$ & $\vartheta(G)$ & MC & bound & iter & added & time & $k$ & bound & iter & added & time\\ 
		\hline
		\textsf{C125.9} & 125 & 787 & 34 & 37.80 & 532 & 35.51 & 9 & 3300 & 188 & 37 & 35 & 10 & 2768 & 127 \\
		\textsf{sanr200\_0.9} & 200 & 2037 & 42 & 49.27 & 1497 & 47.17 & 7 & 4157 & 394 & 49 & 47 & 9 & 4484 & 412 \\ 
		\textsf{brock200\_1} & 200 & 5066 & 21 & 27.45 & 11024 & 26.58 & 5 & 2519 & 1323 & 27 & 26 & 4 & 1719 & 986 \\
		\textsf{C250.9} & 250 & 3141 & 44 & 56.24 & 2470 & 54.45 & 6 & 4028 & 550 & 56 & 54 & 6 & 3622 & 365 \\ 
		\hline
	\end{tabular}
	\caption{Computational results for the strengthened upper bound on the stability number for selected DIMACS instances}
	\label{table:results_strengthening_stability_number}
\end{table}

The implementation of the problem $P(t)$ is done analogously.  First, we switch to the problem $P'(t)$ and investigate times needed for the computations of $C'(k)$ depending on the value of $k \in \mathbb{N}$. For this purpose, we consider instance \textsf{dsjc125.5} from the second DIMACS challenge. This instance has $125$ vertices and $3891$ edges, and we compute $\vartheta(G) = 11.78$ within $14$ seconds. As it can be seen from Figure~\ref{fig:example_b_C_t}, computations of $C'(k)$ for $k \leq \lceil \vartheta(G) \rceil - 1$ take longer than computations for $k \geq \lceil \vartheta(G) \rceil$. The introduction of the problem $P^*(t)$ with the additional constraint $\langle A, Y \rangle = 0$ decreases computational times for $k \leq \lceil \vartheta(G) \rceil - 1$, but increases times for other values of $k$. This is visualized in Figure~\ref{fig:example_c_C_t}.

\begin{figure}[h]  
	\centering 
	\begin{subfigure}[b]{0.3\linewidth}
		\begin{tikzpicture}[scale=0.45]
			\begin{axis}[
				xlabel={$k$},
				ylabel={$C'(k)$},
				xmin=4, xmax=19,
				ymin=0, ymax=600,
				xtick={4,5,6,7,8,9,10,11,12,13,14,15,16,17,18,19},
				ytick={0,100,200,300,400,500,600},
				legend pos=north west,
				ymajorgrids=true,
				grid style=dashed,
				]
				
				\addplot[
				color=blue,
				mark=square,
				]
				coordinates {
					(4,556.29)(5,361.41)(6,238.45)(7,156.18)(8,98.99)(9,58.22)(10,29.05)(11,9.07)(12,0)(13,0)(14,0)(15,0)(16,0)(17,0)(18,0)(19,0)
				};

			\end{axis}
		\end{tikzpicture}
		\caption{Values of $C'(k)$ for different values of $k$} \label{fig:example_a_C_t}  
	\end{subfigure}
	\begin{subfigure}[b]{0.3\linewidth}
		\begin{tikzpicture}[scale=0.45]
			\begin{axis}[
				xlabel={$k$},
				ylabel={CPU time},
				xmin=4, xmax=19,
				ymin=0, ymax=25,
				xtick={4,5,6,7,8,9,10,11,12,13,14,15,16,17,18,19},
				ytick={0,5,10,15,20,25},
				legend pos=north west,
				ymajorgrids=true,
				grid style=dashed,
				]
				
				\addplot[
				color=red,
				mark=square,
				]
				coordinates {
					(4,20.75)(5,19.82)(6,23.1)(7,21.12)(8,22)(9,20.2)(10,17.64)(11,18.54)(12,9.95)(13,8.55)(14,8.28)(15,8.22)(16,8.65)(17,10.15)(18,6.89)(19,8.64)
				};

			\end{axis}
		\end{tikzpicture}
		\caption{CPU times for the computation of $C'(k)$} \label{fig:example_b_C_t}  
	\end{subfigure}
	\begin{subfigure}[b]{0.3\linewidth}
		\begin{tikzpicture}[scale=0.45]
			\begin{axis}[
				xlabel={$k$},
				ylabel={CPU time},
				xmin=4, xmax=19,
				ymin=0, ymax=25,
				xtick={4,5,6,7,8,9,10,11,12,13,14,15,16,17,18,19},
				ytick={0,5,10,15,20,25},
				legend pos=north west,
				ymajorgrids=true,
				grid style=dashed,
				]
				
				\addplot[
				color=red,
				mark=square,
				]
				coordinates {
					(4,8.2)(5,7.76)(6,8.25)(7,7.87)(8,9.34)(9,9.08)(10,9.70)(11,10.6)(12,16.3)(13,17.09)(14,15.52)(15,15.26)(16,13.60)(17,15.51)(18,23.57)(19,15.09)
				};
			\end{axis}
		\end{tikzpicture}
		\caption{CPU times for the computation of $C^*(k)$} \label{fig:example_c_C_t}  
	\end{subfigure}
	
	\caption{Details regarding the computation of $C'(k)$ and $C^*(k)$ for graph \textsf{dsjc125.5}} 
	\label{fig:example_C_t}
\end{figure}
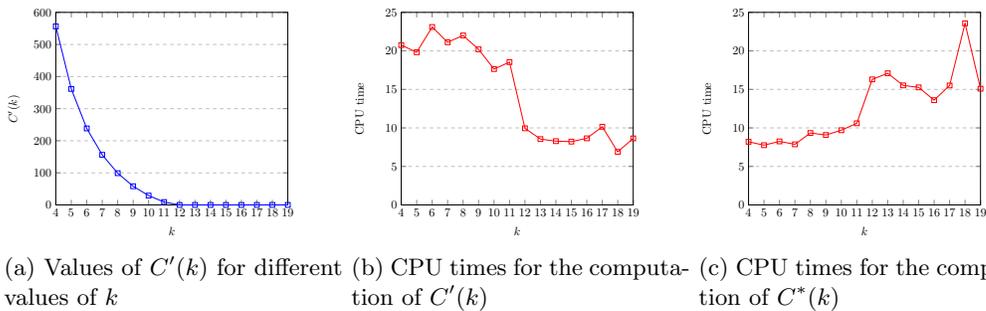  

We investigate times needed for the computations on the critical interval which result with the information that $\chi(G) \geq k + 1$. We consider DIMACS instances and present results in Table~\ref{table:results_selected_DIMACS_C_t}.

\begin{table}[h!]
	\centering
	\setlength{\tabcolsep}{3pt}
	\begin{tabular}{ l r r r | r r | r r | r r } 
		\hline
		\multicolumn{4}{c}{Instance} & \multicolumn{2}{c}{\shortstack{Computation \\ of $\vartheta(G)$ }} & \multicolumn{2}{c}{\shortstack{Computation of\\ $C'(k)$ and $S'(k+1)$ }} &
		\multicolumn{2}{c}{\shortstack{Computation of\\ $C^*(k)$ and $S^*(k+1)$ }} \\
		\hline
		Graph & n & m & $\chi(G)$ & $\vartheta(G)$ & time & $k$ & time  & $k$ & time \\ 
		\hline
		\textsf{dsjc.125.1} & 125 & 736 & 5 & 4.10 & 2 & 4 & 4 & 4 & 4 \\
		\textsf{dsjc.125.5} & 125 & 3891 & 17 & 11.78 & 13 & 11 & 29 & 11 & 28 \\
		\textsf{dsjc.250.1} & 250 & 3218 & 8 & 4.91 & 12 & 4 & 29 & 4 & 28 \\
		\textsf{dsjc.250.5} & 250 & 15668 & 28 & 16.23 & 542 & 16 & 1329 & 16 & 1681 \\
		\textsf{flat\_300\_28\_0} & 300 & 21695 & 28 & 17.00 & 1543 & 17 & 3918 & 17 &3156 \\
		\textsf{dsjc.500.1} & 500 & 12458 & 12 & 6.22 & 328 & 6 & 791 & 6 & 914 \\
		\hline
	\end{tabular}
	\caption{Computational times needed for the computation of $\vartheta(G)$ as well as $C'(k)$, $C'(k + 1)$, $C^*(k)$, and $C^*(k + 1)$ for $k = \lceil \vartheta(G) \rceil - 1$ for selected DIMACS instances}
	\label{table:results_selected_DIMACS_C_t}
\end{table}

As the last step we strengthen the relaxation $P(t)$ with constraints~(\ref{ECC}) proposed in Section~\ref{Strengthen_S(k)}. We start with the formulation $\vartheta(G)$ as well as $P'(t)$ and proceed iteratively in the same manner as for the problem $Q'(t)$, except that we set now $\epsilon = 0.05$. We consider instances from the second DIMACS challenge for which the enumeration of all maximal cliques is done in reasonable time and present results in Table~\ref{table:results_strengthening_chromatic_number}.

\begin{table}[h!]
	\centering
	\setlength{\tabcolsep}{3pt}
	\begin{tabular}{ l r r r r r | r r r r | r r r r r} 
		\hline
		\multicolumn{6}{c}{Instance} & \multicolumn{4}{c}{Strengthening of $\vartheta(G)$} & \multicolumn{5}{c}{Strengthening of $P'(k)$} \\
		\hline
		Graph & n & m & $\chi(G)$ & $\vartheta(G)$ & MC & bound & iter & added & time & $k$ & bound & iter & added & time\\ 
		\hline
		\textsf{dsjc.125.1} & 125 & 736 & 5 & 4.10 & 487 & 4.33 & 13 & 2171  & 119 & 5 & 5  & 1 & 0 & 2 \\
		\textsf{dsjc.125.5} & 125 & 3891 & 17 & 11.78 & 46494 & 13.18 & 14  & 5441 & 2065 & 12 & 14 & 12 & 4521 & 1204 \\
		\textsf{dsjc.250.1} & 250 & 3218 & 8 & 4.91 & 2584 & 5.04 & 6 & 2701 & 320 & 5 & 6 & 5 & 1801 & 134 \\
		\hline
	\end{tabular}
	\caption{Computational results for the strengthened lower bound on the chromatic number for selected DIMACS instances}
	\label{table:results_strengthening_chromatic_number}
\end{table}

From the computational results presented in Tables~\ref{table:results_strengthening_stability_number} and \ref{table:results_strengthening_chromatic_number} we can note following.
Assume that we are interested in strengthening of $\vartheta(G)$ towards the stability number of a graph, and we would like to do it in an iterative way. The first step would be to compute the value of $\vartheta(G)$ and then to add constraints iteratively, and that for violated constraints only. But assume $\vartheta(G) \notin \mathbb{N}$. Then a valid constraint for the relaxation is $e^tx \leq \lfloor \vartheta(G) \rfloor$. Therefore, the idea is to start the iterative process only after we add the constraint $e^tx \leq \lfloor \vartheta(G) \rfloor$, and to update it during the process. This procedure may be of advantage, since starting with an integral solution may yield in less violations and thus better computational times. The same approach can be used for strengthening of $\vartheta(G)$ towards the chromatic number of a graph, and that by adding the constraint $t \geq \lceil \vartheta(G) \rceil$. We consider the same instances as well as same constraints as before and present computational results in Table~\ref{table:results_strengthening_2}.

	\begin{table}
		\centering
		\setlength{\tabcolsep}{3pt}
		\raggedright
		\begin{subtable}[t]{.45\textwidth}
		\begin{tabular}{ l r r r r } 
			\hline
			\multicolumn{1}{c}{} & \multicolumn{4}{c}{Strengthening of $\lfloor \vartheta(G) \rfloor$} \\
			\hline
			Graph & bound & iter & added & time \\ 
			\hline
			\textsf{C125.9} & 35 & 7 & 2412 & 72\\
			\textsf{sanr200\_0.9} & 47 & 7 & 3588 & 237 \\
			\textsf{brock200\_1} & 26 & 4 & 1884 & 949 \\
			\textsf{C250.9} & 54 & 5 & 3579 & 313 \\
			\hline
		\end{tabular}
		\caption{Strengthening of  $\lfloor \vartheta(G) \rfloor$ towards the stability number}
		\label{table:results_strengthening_stability_number_2}
		\end{subtable}
	\raggedleft
	\begin{subtable}[t]{.45\textwidth}
		\begin{tabular}{ l r r r r } 
			\hline
			\multicolumn{1}{c}{} & \multicolumn{4}{c}{Strengthening of $\lceil \vartheta(G) \rceil$} \\
			\hline
			Graph & bound & iter & added & time \\ 
			\hline
			\textsf{dsjc.125.1} & 5 & 1 & 0 & 5 \\
			\textsf{dsjc.125.5} & 14 & 9 & 3544 & 1309 \\
			\textsf{dsjc.250.1} & 6 & 3 & 1448 & 157 \\
			\hline
		& & & &
		\end{tabular}
		\caption{Strengthening of $\lceil \vartheta(G) \rceil$ towards the chromatic number}
		\label{table:results_strengthening_chromatic_number_2}
	\end{subtable}
	\caption{Computational results for the strengthening of $\lfloor \vartheta(G) \rfloor$ and $\lceil \vartheta(G) \rceil$ for the selected DIMACS instances}
	\label{table:results_strengthening_2}
	\end{table}

 \section{Summary and Conclusion}

In this work we proposed semidefinite relaxations $P(t)$ and $Q(t)$
for two quadratic optimization problems which can be used to get
bounds on $\alpha(G)$ and $\chi(G)$. These relaxations are closely
related to the well-known Lovász Theta function $\vartheta(G)$. With
our new  relaxations we are able to give statements regarding the
bounds on $\alpha(G)$ and $\chi(G)$ without knowing the value of
$\vartheta(G)$ per se.
As shown in Figures~\ref{fig:example} and \ref{fig:example_C_t}, for
$t \geq 1$ testing whether $\alpha(G) \leq \lfloor t \rfloor$ takes
less time than the computation of $\vartheta(G)$, while testing
whether $\chi(G) \geq \lceil t \rceil$ takes less time than the
computation of $\vartheta(G)$ for certain values of $t$. Furthermore,
we proposed strengthening of the relaxations which are based on a list
of all maximal cliques of the graph in question. Here we note that the
same strengthening can be used on the standard relaxations based on
$\vartheta(G)$.
Results in Tables~\ref{table:results_strengthening_stability_number}
and \ref{table:results_strengthening_chromatic_number} show that we
were able to improve bounds and that in a reasonable time.
We have performed experiments for instances for which the enumeration
of all maximal cliques is done rather quickly.
For larger instances this might not be the case, however, we note that
the enumeration can be limited to clique sizes up to fixed $c$,
then this is still polynomial time. Finally, from
Tables~\ref{table:results_strengthening_stability_number} and
\ref{table:results_strengthening_chromatic_number} we see
that strengthening of the relaxations $P'(t)$ and $Q'(t)$ is computed
quicker than the strengthening of  $\vartheta(G)$.
This arises from the fact that is sufficient to consider relaxations
$P'(t)$ and $Q'(t)$ as well as their refinements only for $t \in
\mathbb{N}$.
But the same approach can be used for refinements of $\vartheta(G)$,
as shown in Table~\ref{table:results_strengthening_2}.

For the future it is planned to compare our approach with the standard hierarchies as proposed in~\cite{AARW:15}.

\medskip
{\bf Acknowledgement:}
We would like to thank Daniel Brosch and Elisabeth Gaar for
their useful comments.

\bibliographystyle{plain}
\bibliography{pucher_rendl}

\end{document}